\documentclass[12pt]{amsart}

\usepackage[margin=1.15in]{geometry}
\usepackage{amscd,amssymb, amsmath, wasysym, mathrsfs, mathtools, bbold, color}
\usepackage{graphicx}
\usepackage[all, cmtip]{xy}

\usepackage{url}

\definecolor{hot}{RGB}{65,105,225}

\usepackage[pagebackref=true,colorlinks=true, linkcolor=hot ,  citecolor=hot, urlcolor=hot]{hyperref}

\theoremstyle{plain}
\newtheorem{theorem}{Theorem}[section]
\newtheorem{prop}[theorem]{Proposition}

\newtheorem*{claim}{Claim}

\newtheorem{cor}[theorem]{Corollary}
\newtheorem{conj}[theorem]{Conjecture}
\newtheorem{lemma}[theorem]{Lemma}

\theoremstyle{definition}

\newtheorem{defn}[theorem]{Definition}
\newtheorem{rmk}[theorem]{Remark}

\newtheorem*{ex*}{Example}

\newtheorem*{question}{Question}

\newcommand\sA{{\mathcal A}}
\newcommand\sO{{\mathcal O}}

\newcommand\sF{{\mathcal F}}

\newcommand\sR{\mathcal{R}}
\newcommand\rh{\mathcal{RH}}

\newcommand\fV{\mathscr{V}}
\newcommand\fW{\mathscr{W}}

\newcommand\pp{{\mathbf{P}}}
\newcommand\bP{{\mathbf{P}}}

\newcommand\bQ{{\mathbf{Q}}}
\newcommand\zz{{\mathbf{Z}}}

\newcommand\bR{{\mathbf{R}}}
\newcommand\cc{{\mathbf{C}}}
\newcommand\bC{{\mathbf{C}}}

\newcommand\aaa{{\mathbf{A}}}

\newcommand\hh{{\mathbf{H}}}

\newcommand\mb{\mathbf{M}_{\textup{B}}}
\newcommand\mdr{\mathbf{M}_{\textup{DR}}}

\newcommand\mhod{\mathbf{M}_{\textup{Hod}}}
\newcommand\odr{\Omega_{\textup{DR}}}

\newcommand\betti{\textup{B}}
\newcommand\dr{\textup{DR}}
\newcommand\res{\rm{res}}
\newcommand\ev{\rm{ev}}

\newcommand{\ubul}{{\,\begin{picture}(-1,1)(-1,-3)\circle*{2}\end{picture}\ }}



\DeclareMathOperator{\id}{id}                    

\DeclareMathOperator{\im}{Im}

\DeclareMathOperator{\pic}{Pic}
\DeclareMathOperator{\rank}{rank}

\DeclareMathOperator{\homo}{Hom}

\title{Cohomology jump loci of quasi-compact K\"ahler manifolds}

\author{Nero Budur}
\address{KU Leuven, Celestijnenlaan 200B, B-3001 Leuven, Belgium} 
\email{Nero.Budur@kuleuven.be}

\author{Botong Wang}
\address{University of Wisconsin, Van Vleck Hall, 480 Lincoln Drive, Madison, WI, USA}
\email{bwang274@wisc.edu}

\begin{document}

\date{}

\begin{abstract}
We give two applications of the exponential Ax-Lindemann Theorem to local systems. One application is to show that for a connected topological space, the existence of a finite model of the real homotopy type implies linearity of the cohomology jump loci around the trivial local system. Another application is the linearity of the cohomology jump loci of rank one local systems on quasi-compact K\"ahler manifolds. 
\end{abstract}
\maketitle
\section{Introduction}
Given a path-connected topological space $X$, we define the \textbf{Betti moduli space} 
$$\mb(X)=\homo(\pi_1(X), \cc^*)$$
to be the space of rank one characters of $\pi_1(X)$. When $X$ is of the homotopy type of a finite CW-complex, $\mb(X)$ is an algebraic group, and it is isomorphic to the direct product of $(\cc^*)^{b_1(X)}$ and a finite group.

For any representation $\rho\in\mb(X)$, there is a unique $\cc$-local system of rank one $L_\rho$ on $X$, whose monodromy representation is isomorphic to $\rho$. Thus, $\mb(X)$ is naturally the moduli space of rank one local systems on $X$. In $\mb(X)$, there are some canonically defined algebraic subsets,
$$\Sigma^i_k(X)=\{\rho\in \mb(X)| \dim H^i(X, L_\rho)\geq k\},$$
called the \textbf{cohomology jump loci}. One can define these loci more precisely as subschemes of $\mb(W)$, not necessarily reduced. However, for the purpose of this paper, we only consider their induced reduced structures. 

In this article, a {\bf subtorus} is an algebraic subgroup $(\cc^*)^p\subset \mb(X)$.
Based on the work of Green and Lazarsfeld \cite{gl}, Arapura \cite{a} proved that when $X$ is a compact K\"ahler manifold, each $\Sigma^i_k(X)$ is a finite union of translated  subtori in $\mb(X)$.  When $X$ is a quasi-compact K\"ahler manifold (the complement of a normal crossing divisor in a compact K\"ahler manifold), Arapura proved the same statement holds when the mixed Hodge structure on $H^1(X, \cc)$  is pure. When $X$ is a projective manifold, Simpson \cite{si} showed that $\Sigma^i_k(X)$ is a finite union of torsion-translated subtori. When $X$ is a quasi-projective manifold, it was first proved by Dimca and Papadima \cite{dp} that any irreducible component of $\Sigma^i_k(X)$ passing through the trivial representation $\mathbf{1}$ is a subtorus. Later, in \cite{bw1} we proved in this case that each irreducible component of $\Sigma^i_k(X)$ is a torsion translate of a subtorus in $\mb(X)$. Around the same time, the second author proved in \cite{w} the same result for all compact K\"ahler manifolds, achieving a final positive answer to an original conjecture by Beauville \cite{Be}. See the survey \cite{bw-sur} for more on this subject.


The first main result is a generalization of Arapura's result by removing the assumption on $H^1(X, \cc)$. 

\begin{theorem}\label{main1}
Let $X$ be a connected quasi-compact K\"ahler manifold. Then each $\Sigma^i_k(X)$ is a finite union of translates of subtori in $\mb(X)$. 
\end{theorem}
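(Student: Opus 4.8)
The plan is to transport the statement to the universal cover of the torus part of $\mb(X)$, prove a linearity statement there by Hodge theory, and descend using the exponential Ax--Lindemann theorem together with the lattice symmetry of the picture.

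First come the standard reductions. Since $X$ has the homotopy type of a finite CW--complex, $\mb(X)$ is a finite disjoint union of translates of the identity component $T:=\mb(X)^{0}\cong(\cc^{*})^{b_{1}(X)}$, and $\Sigma^{i}_{k}(X)$ is Zariski closed by upper semicontinuity of cohomology for the tautological family of rank one local systems over $\mb(X)$. A translate of a subtorus of $T$ lying in an arbitrary component of $\mb(X)$ is again a translate of a subtorus of $\mb(X)$, so it suffices to fix one component, translate it onto $T$, and prove that an arbitrary Zariski closed $Z\subseteq T$ arising this way is a finite union of translates of subtori of $T$. Now introduce the exponential map $E\colon H^{1}(X,\cc)\to T$ of the torus $T$, i.e.\ the universal cover of $T$, under the identification $H^{1}(X,\cc)=\homo(H_{1}(X,\zz),\cc)$, with kernel the lattice $\Lambda:=2\pi i\,H^{1}(X,\zz)$. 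Then $E^{-1}(Z)$ is a closed analytic, $\Lambda$--invariant subset of $H^{1}(X,\cc)$, and restricted to a fundamental domain for $\Lambda$ it is definable in the o--minimal structure $\rr_{\mathrm{an},\exp}$, so that the exponential Ax--Lindemann theorem applies to it.

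The analytic heart, and the step I expect to be the main obstacle, is the claim that $E^{-1}(Z)$ is, in a neighbourhood of each of its points, a finite union of translates of $\cc$--linear subspaces of $H^{1}(X,\cc)$; as $E^{-1}(Z)$ is closed analytic, each such local affine piece is then an irreducible component of $E^{-1}(Z)$, so globally $E^{-1}(Z)$ is a locally finite union of translates of $\cc$--linear subspaces. To establish this I would compute $H^{i}(X,L_{\rho})$, for $\rho=E(a)$ near a fixed character, by the logarithmic de Rham complex of the Deligne canonical extension of $L_{\rho}$ across the normal crossing divisor $D=\bar X\smallsetminus X$ in a compact K\"ahler compactification $\bar X$, and then run the Hodge-theoretic analysis of the jump loci of such complexes in the spirit of Green--Lazarsfeld \cite{gl} and Arapura \cite{a}. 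Arapura's purity hypothesis on $H^{1}(X,\cc)$ serves precisely to make one such model work uniformly over all of $T$; to dispense with it I would stratify $T$ by which residues of the canonical extension are integral --- equivalently by the meridian monodromies $\rho(\gamma_{j})$, equivalently by the weight filtration on $H^{1}(X,\cc)$ --- each stratum being a coset of the subtorus cut out by conditions $\rho(\gamma_{j})=1$ on the meridians $\gamma_{j}$ of the components of $D$. Over each stratum the canonical extension varies in a genuine holomorphic family with no weight--two contribution from those meridians, so Arapura's computation of the jump loci as translated linear loci goes through on the stratum, and assembling the finitely many strata yields the local linearity of $E^{-1}(Z)$. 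This is where one really uses that $X$ is K\"ahler, and it is also where the finite-model phenomenon behind the first main result enters.

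Granting this, the descent is formal. Each translated $\cc$--linear subspace occurring in $E^{-1}(Z)$ is an irreducible algebraic subvariety of $H^{1}(X,\cc)$ contained in $E^{-1}(Z)$, hence contained in a maximal one. By the exponential Ax--Lindemann theorem every such maximal algebraic subvariety has the form $V+c$ with $V$ a $\cc$--linear subspace spanned by elements of $\Lambda$, so $E(V)$ is a subtorus of $T$ and $E(V+c)=E(c)\cdot E(V)\subseteq Z$ is a translate of a subtorus; by the accompanying finiteness statement only finitely many such maximal $V+c$ occur modulo $\Lambda$, producing finitely many translated subtori $Z_{1},\dots,Z_{m}\subseteq Z$. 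Since the translated linear subspaces cover $E^{-1}(Z)$, we get $Z=E(E^{-1}(Z))=Z_{1}\cup\dots\cup Z_{m}$, and undoing the reduction to the identity component shows $\Sigma^{i}_{k}(X)$ is a finite union of translates of subtori of $\mb(X)$. Thus the real difficulty is concentrated in the Hodge-theoretic local linearity of $E^{-1}(Z)$ and in setting up the Deligne extension and residue stratification cleanly enough that the failure of purity causes no trouble; the Ax--Lindemann input is used only as a black box converting ``union of affine subspaces'' into ``finite union of translated subtori''.
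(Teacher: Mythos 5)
There is a genuine gap, and it sits exactly where you predicted: the ``Hodge-theoretic local linearity of $E^{-1}(Z)$'' is not proved by your sketch, and it is essentially the content of the theorem at the level of germs. Your justification is to stratify $T$ by the meridian monodromies $\rho(\gamma_j)=1$ and to claim that ``Arapura's computation goes through on each stratum.'' This misdiagnoses the role of purity: Arapura's hypothesis is purity of the mixed Hodge structure on $H^1(X,\cc)$, and restricting to a stratum of the meridian stratification does not restore it. On the deepest stratum (all $\rho(\gamma_j)=1$) you are looking at characters pulled back from $\bar{X}$, where the compact K\"ahler results apply; on every other stratum $H^1(X,\cc)$ is still mixed, the Deligne extension does not vary holomorphically across the locus where residues hit positive integers (the ``bad locus''), and no known Green--Lazarsfeld/Arapura-type argument produces linearity there --- this is precisely the open point the theorem settles, so the step is circular as written. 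There is also a structural mismatch with the Ax--Lindemann descent: to apply Ax's theorem (or Proposition \ref{main0}) you need an \emph{algebraic} subvariety of the cover, of the same dimension, mapping into $Z$. The natural ``de Rham'' object here is not the universal cover $H^1(X,\cc)$ but the moduli space $\mdr(\bar{X}/D)$ of logarithmic flat connections, which is only an affine bundle over a compact torus (it quotients $H^1(X,\cc)$ by the weight-one lattice alone); the de Rham jump loci are analytically closed there but are not algebraic subsets of any $\cc^n$, and in the weight-one directions the affine Ax--Lindemann statement gives nothing --- one needs Simpson's growth argument excluding codimension-one Betti--de Rham sets in those directions. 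Your o-minimality remark does not repair this: definability of $E^{-1}(Z)$ in $\rr_{\mathrm{an},\exp}$ is neither what Ax's theorem consumes nor by itself sufficient. Finally, the ``accompanying finiteness statement'' you invoke is not part of the exponential Ax--Lindemann theorem; in this situation finiteness should instead be extracted from the fact that $Z$ has finitely many irreducible components, each of which, once exhibited as $\exp$ of an algebraic subvariety of the same dimension, is a translated subtorus by Proposition \ref{main0}.

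For comparison, the paper never works on the full universal cover. It first shows (Proposition \ref{prop_bdr}) that each component $S$ of $\Sigma^i_k(X)\cap\mb^0(X)$ is a \emph{Betti--de Rham set}: the de Rham jump loci $\Sigma^i_k(\bar{X}/D)\subset\mdr(\bar{X}/D)$ are shown to be ``de Rham closed'' by compactifying fiberwise via the moduli of $\lambda$-connections, and Deligne's comparison theorem away from the bad locus, together with the choice of the component of $\mdr(\bar{X}/D)$ containing the Deligne extension of some $L_\rho$ with $\rho\in S$, produces a de Rham closed set $R$ with $\dim R=\dim S$ and $RH(R)=S$. The linearity is then obtained from the structure theorem for Betti--de Rham sets of 1-Hodge structures (Theorem \ref{structurebdr}), whose proof splits into the pure $(1,1)$ case (Proposition \ref{main0}, the affine Ax--Lindemann input) and the pure weight-one case (Simpson's exponential-growth argument), glued by an induction; torsion in $H_1(X,\zz)$ is handled at the end by a finite abelian cover, rather than by translating components of $\mb(X)$ onto the identity component. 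Your descent step would be fine if the local linearity upstream were available, but as proposed the analytic heart is asserted, not proved.
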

As mentioned above, when $X$ is a quasi-projective manifold this was already proved in \cite{bw1}. However, our proof here is quite different from the one of \cite{bw1}. We generalize Simpson's (see \cite{si}) Betti-de Rham sets of projective manifolds to quasi-compact K\"ahler manifolds, and then prove the irreducible ones are translates of subtori. In particular, the cohomology jump loci are Betti-de Rham sets, and hence are union of translates of subtori. 

We conjecture that the full non-compact analog of the main result of \cite{w} holds: 
\begin{conj}\label{conj}
Let $X$ be a connected quasi-compact K\"ahler manifold. Then each irreducible component of $\Sigma^i_k(X)$ contains a torsion point. 
\end{conj}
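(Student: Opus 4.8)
The plan is to upgrade Theorem \ref{main1} from ``translate of a subtorus'' to ``torsion translate'' by feeding the two extra structures that the cohomology jump loci carry---an arithmetic (Galois) structure and a Hodge-theoretic structure---into a Kronecker-type argument. By Theorem \ref{main1} it suffices to fix one irreducible component $W=\rho_0\cdot T$ of $\Sigma^i_k(X)$, where $T\subseteq\mb(X)$ is a subtorus, form the quotient torus $Q=\mb(X)/T$ together with a $\qq$-isomorphism $Q\cong(\cc^*)^m$, and prove that the image $\bar\rho_0\in Q$ of the translation is a torsion point; this is equivalent to $W$ containing a torsion character.

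First I would record the arithmetic input. Because a quasi-compact K\"ahler manifold has the homotopy type of a finite CW complex, $\mb(X)=\homo(\pi_1(X),\cc^*)=\spec\,\qq[H_1(X,\zz)]\otimes\cc$ is defined over $\qq$, and each $\Sigma^i_k(X)$ is cut out by the Fitting ideals of the ``twisted'' cochain complex of free $\zz[H_1(X,\zz)]$-modules attached to a finite cell structure; hence $\Sigma^i_k(X)$ is defined over $\qq$. Consequently $\mathrm{Aut}(\cc/\qq)$ acts on $\mb(X)(\cc)$ preserving $\Sigma^i_k(X)$, so it permutes the finitely many components $\rho_0T$. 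Since $T$ is defined over $\qq$ and therefore fixed, the orbit of $\bar\rho_0$ under $\mathrm{Aut}(\cc/\qq)$ is finite and consists of translation classes of components of $\Sigma^i_k(X)$; in particular $\bar\rho_0\in Q(\bar\qq)$, and $\mathrm{Aut}(\cc/\qq)$ acts coordinatewise on its coordinates in $(\cc^*)^m$.

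The Hodge-theoretic input is where the real work lies. In the compact case one uses that under the Riemann--Hilbert correspondence $\rh$ the jump loci match the Dolbeault jump loci on $\mdol$, which are invariant under the Higgs $\cc^*$-action $t\cdot(L,\theta)=(L,t\theta)$; passing to the limit $t\to 0$ shows each component meets the fixed locus $\{\theta=0\}$, i.e. the unitary characters, and in fact that the translation class $\bar\rho_0$ is unitary. Together with the finite $\mathrm{Aut}(\cc/\qq)$-orbit this forces every Galois conjugate of $\bar\rho_0$, hence every coordinate conjugate, to be unitary as well. To then invoke Kronecker's theorem coordinatewise I also need integrality; this I would extract from the local monodromy theorem, which makes the boundary eigenvalues of $L_{\rho_0}$ roots of unity and the relevant coordinates algebraic units. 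Granting unitarity at all archimedean places together with the unit property, Kronecker gives that each coordinate of $\bar\rho_0$ is a root of unity, so $\bar\rho_0$ is torsion and $W$ contains a torsion point.

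The main obstacle is establishing the Hodge-theoretic unitarity of $\bar\rho_0$ in the genuinely quasi-compact, mixed setting, together with the matching integrality. Non-compactness destroys the Poincar\'e-duality symmetry $\rho\mapsto\rho^{-1}$ of $\Sigma^i_k(X)$, since ordinary and compactly supported cohomology now differ, so the clean involution $\rho\mapsto\bar\rho^{-1}$ cutting out the unitary characters is no longer available; and the non-purity of $H^1(X,\cc)$ means the translation can acquire contributions from the weight-two residue directions along the normal crossing divisor, which the pure Higgs $\cc^*$-action does not obviously control. Carrying out the $t\to 0$ limit therefore requires the full theory of tame harmonic bundles and the $\cc^*$-action on the noncompact Dolbeault moduli (Simpson, Mochizuki), and one must show that the induced action still contracts each Betti--de Rham component to a unitary, quasi-unipotently bounded point compatibly with the weight filtration. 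Since a general quasi-compact K\"ahler manifold admits no algebraic or arithmetic model, reconciling this archimedean Hodge control with the integrality needed for Kronecker is precisely the gap that keeps the statement conjectural.
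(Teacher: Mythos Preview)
This statement is labelled a \emph{conjecture} in the paper and is not proved there; the authors write explicitly that they ``cannot yet produce torsion points.'' So there is no proof in the paper to compare against, and your own final paragraph already concedes that what you have written is a strategy with an unresolved gap rather than a proof.

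On the strategy: the arithmetic half is correct and standard---$\Sigma^i_k(X)$ is defined over $\qq$, the subtorus $T$ is defined over $\qq$, so the translation class $\bar\rho_0$ lies in $Q(\bar\qq)$ with finite Galois orbit. But two further issues compound the obstacle you already identify. First, your appeal to the local monodromy theorem for integrality is circular: that theorem applies to local systems underlying admissible variations of (mixed) Hodge structure, not to an arbitrary rank one $L_{\rho_0}$, and in rank one ``quasi-unipotent boundary monodromy'' \emph{is} the statement that the residue eigenvalues are roots of unity, which is essentially the weight-two portion of the conjecture you are trying to prove. Second, Kronecker's theorem requires the coordinates of $\bar\rho_0$ to be algebraic \emph{integers}; unitarity of all Galois conjugates alone does not give this (e.g.\ $(3+4i)/5\in S^1$ has unit-circle conjugate $(3-4i)/5$ yet is not a root of unity). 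In the quasi-projective case \cite{bw1} the missing integrality is supplied by $\ell$-adic/spread-out arguments, which are unavailable over a non-algebraic K\"ahler base; in the compact K\"ahler case \cite{w} the argument exploits more of the Betti--de Rham--Dolbeault triple structure than the bare ``unitary $+$ Kronecker'' step you sketch. So the conjecture remains genuinely open, and your outline locates but does not close the gap.
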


Moreover, to the best of our knowledge, we do not know an example of a quasi-compact K\"ahler manifold that is not homotopy equivalent to any quasi-projective manifold. In the compact case, such an example was constructed by Voisin \cite{v}. So an interesting question would be the following. 

\begin{question}
Is there a quasi-compact K\"ahler manifold that is not homotopy equivalent to any quasi-projective manifold?
\end{question}

The second main result reveals more of the topology responsible for the linearity phenomenon for cohomology jump loci. In this article we use the shorter terminology {\bf differential algebra} to mean a commutative differential graded algebra over $\cc$.

\begin{theorem}\label{main2}
Suppose $X$ is a path-connected topological space, and suppose $X$ is homotopy equivalent to a finite CW-complex.
\begin{enumerate}
\item If the real homotopy type of $X$ is equivalent to a finite-dimensional differential algebra, then each irreducible component of $\Sigma^i_k(X)$ passing through the origin is a subtorus. 
\item If $\rho\in \mb(X)$ and the differential algebra pair $(\odr^\ubul(X), \odr^\ubul(L_\rho))$ is homotopy equivalent to another differential algebra pair $(C^\ubul, M^\ubul)$ such that $M^\ubul$ has finite dimension on each degree, then every irreducible component of $\Sigma^i_k(X)$ passing through $\rho$ is the translate of a subtorus. 
\end{enumerate}
\end{theorem}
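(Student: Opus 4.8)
The plan is to deduce both statements from the exponential Ax--Lindemann theorem, by using a finite differential algebra model to show that near $\rho$ the cohomology jump loci are, after pulling back along the exponential map, the germ of an algebraic variety, and then globalizing this. First I would reduce (1) to (2). If the real homotopy type of $X$ is a finite-dimensional differential algebra $A^\ubul$, then $\odr^\ubul(X)$ is homotopy equivalent to the finite-dimensional algebra $C^\ubul:=A^\ubul\otimes_\rr\cc$; and since $L_{\mathbf{1}}$ is the constant sheaf, the pair $(\odr^\ubul(X),\odr^\ubul(L_{\mathbf{1}}))=(\odr^\ubul(X),\odr^\ubul(X))$ is homotopy equivalent to $(C^\ubul,C^\ubul)$, whose module part has finite dimension in each degree. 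Hence (1) is the case $\rho=\mathbf{1}$ of (2), together with the elementary observation that a translate of a subtorus containing the trivial character is a subtorus. So assume the hypothesis of (2).

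Fix $\rho$ and an irreducible component $Z$ of $\Sigma^i_k(X)$ through $\rho$. Since $X$ is homotopy equivalent to a finite CW-complex, $\Sigma^i_k(X)$ is a closed algebraic subset of $\mb(X)$, so $Z$ is an irreducible complex algebraic variety. Write $\rho=\chi\cdot\exp(a)$ with $\chi\in\mb(X)$ a torsion character and $a\in H^1(X, \cc)$, so that $c\colon H^1(X, \cc)\to\mb(X)$, $\omega\mapsto\chi\cdot\exp(\omega)$, is a covering map onto the connected component of $\mb(X)$ containing $\rho$, with $c(a)=\rho$. Being connected, $Z$ lies in that component, and $c^{-1}(Z)$ is a closed analytic subset of $H^1(X, \cc)$, everywhere locally biholomorphic to $Z$; let $\widetilde Z$ be its irreducible component through $a$, so $c(\widetilde Z)\subseteq Z$ and $\dim\widetilde Z=\dim Z$. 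Deforming the differential of $\odr^\ubul(L_\rho)$ by degree one cocycles $\omega$ of $\odr^\ubul(X)$ yields complexes computing $H^i(X, L_{\rho'})$ for all rank one local systems $L_{\rho'}$ in this component as $\omega$ varies; consequently the germ of $c^{-1}(\Sigma^i_k(X))$ at $a$ coincides with the germ at the origin of the cohomology jump locus of this deformation of the differential module $\odr^\ubul(L_\rho)$ over $\odr^\ubul(X)$, with parameter space $H^1(X, \cc)$.

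Now comes the key point, that $\widetilde Z$ is algebraic. By the invariance of the cohomology jump loci of a differential algebra pair under homotopy equivalence, the germ above is computed equally well from $(C^\ubul,M^\ubul)$; and since each $M^j$ is finite-dimensional while the parameter space $H^1(C^\ubul)\cong H^1(X, \cc)$ is finite-dimensional, that jump locus is a genuine affine algebraic variety $R\subseteq H^1(X, \cc)$, cut out by rank conditions on the finite-dimensional maps $d_M+\omega\colon M^{i-1}\to M^i$ and $M^i\to M^{i+1}$. Hence the germ of $\widetilde Z$ at $a$ equals the germ at $a$ of $R_Z$ for some irreducible component $R_Z$ of $R$ (translated to pass through $a$); since $\widetilde Z$ and $R_Z$ are irreducible analytic sets agreeing on a neighborhood of $a$, the identity principle for irreducible analytic sets forces $\widetilde Z=R_Z$, so $\widetilde Z$ is algebraic. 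I expect this to be the main obstacle: for a general algebraic subvariety of a torus the preimage under $\exp$ is \emph{not} locally algebraic --- this is itself a consequence of Ax's theorem --- so the local algebraicity here genuinely uses the finiteness hypothesis and must be extracted through the deformation theory of cohomology jump loci together with its invariance under homotopy equivalence.

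Finally, apply the exponential Ax--Lindemann theorem to the irreducible algebraic variety $\widetilde Z\subseteq H^1(X, \cc)\cong\cc^{\,b_1(X)}$: the Zariski closure of $\exp(\widetilde Z)$ in the identity component of $\mb(X)$ is a coset of a subtorus, hence the Zariski closure of $c(\widetilde Z)=\chi\cdot\exp(\widetilde Z)$ is a translate $\chi'S$ of a subtorus $S\subseteq\mb(X)$. Since $\exp$ is a covering map, $\dim\exp(\widetilde Z)=\dim\widetilde Z=\dim Z$, so $\dim S\ge\dim Z$; and $c(\widetilde Z)\subseteq Z$ with $Z$ Zariski closed gives $\chi'S\subseteq Z$. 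As $Z$ is irreducible, $Z=\chi'S$, a translate of a subtorus. This proves (2), and hence (1) by the reduction above.
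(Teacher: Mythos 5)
Your overall route is the same as the paper's: use the finite model to compare the germ of $\Sigma^i_k(X)$ at $\rho$ with the (algebraic) resonance/jump locus of the model via the invariance results of \cite{dp} and \cite[Theorem 7.2]{bw2} (this is exactly Theorem \ref{germ} and its pair version, which you assert rather than prove, just as the paper cites them), and then feed the resulting algebraic object into the exponential Ax--Lindemann theorem (Proposition \ref{main0}). The reduction of (1) to (2) and the final dimension-count showing $Z=\chi'S$ are fine. However, there is a genuine gap in the step where you globalize: you claim that the germ of $\widetilde Z$ at $a$ \emph{equals} the germ at $a$ of a single irreducible component $R_Z$ of the algebraic jump locus $R$, and then invoke the identity principle to conclude $\widetilde Z=R_Z$, i.e.\ that $\widetilde Z$ is algebraic. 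What the germ comparison actually gives is an equality of germs of the \emph{total} sets, $\bigl(c^{-1}(\Sigma^i_k(X)),a\bigr)=\bigl(a+R,a\bigr)$. A globally irreducible analytic set can be locally reducible (think of a nodal curve), so the germ of $\widetilde Z$ at $a$ may be a union of several local branches, and these branches may be distributed among the germs of \emph{different} algebraic components of $a+R$; conversely a component of $a+R$ through $a$ may have branches not lying on $\widetilde Z$. So neither inclusion between $(\widetilde Z,a)$ and $(R_Z,a)$ is justified, the identity principle has nothing to apply to, and the algebraicity of $\widetilde Z$ is, as written, only known a posteriori (it follows from the theorem you are proving, since then $\widetilde Z$ is an affine translate of a linear subspace, but it cannot be assumed).

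The gap is localized and repairable, and the repair is essentially the paper's Corollary \ref{germtori}: instead of matching $\widetilde Z$ upstairs with a single component of $R$, match each Betti component $Z$ through $\rho$ with a de Rham component. Concretely, since the germ of $\Sigma^i_k(X)$ at $\rho$ is the image under $\chi\cdot\exp$ of the germ of $R$, some irreducible component $R_j$ of $R$ has image containing a nonempty open piece of $Z$, hence $\dim R_j\geq\dim Z$; irreducibility forces $\exp(R_j)$ (locally) into $Z$ and then $\dim R_j=\dim Z$, and Proposition \ref{main0} applies to the pair $(R_j,\,\overline{\chi\exp(R_j)}^{\,\mathrm{Zar}})$. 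Alternatively, within your setup one can salvage the argument by only claiming the containment of germs $(\widetilde Z,a)\subset (R_1\cup\dots\cup R_m,a)$, promoting it to a global containment $\widetilde Z\subset R_1\cup\dots\cup R_m$ by the identity principle (this direction is legitimate), deducing $\widetilde Z\subset R_j$ for some $j$ by irreducibility, and then applying Ax--Lindemann to $R_j$; one must then add a short argument (using that the Zariski closure of $\chi\exp(R_j)$ equals that of $\chi\exp(R_j\cap U)\subset\Sigma^i_k(X)$, and that it meets $Z$ in a set of dimension $\dim Z$) to identify the resulting coset with $Z$ rather than with some other component through $\rho$. Without one of these fixes, the claimed equality $\widetilde Z=R_Z$ is the missing step.
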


In part (2), $\odr^\ubul(X)$ is Sullivan's differential algebra of piecewise smooth $\bC$-forms on  $X$.  The differential algebra $\odr^\ubul(X)$ can be replaced by the de Rham complex of smooth $\bC$-forms on $X$ if $X$ is a smooth manifold. In the above, $\odr^\ubul(L_\rho)$ is the module over $\odr^\ubul(X)$  obtained from the forms with values in the rank one local system $L_\rho$ attached to $\rho$. Also in part (2), the notions of pair and homotopy between pairs are as defined in \cite{bw2}. Part (2) implies part (1) if $\rho=\mathbf{1}$. Part (1) is a generalization of a result of Dimca-Papadima \cite[Theorem C(2)]{dp} which required in addition a positive-weight structure on the finite-dimensional model. Our proof is different, not depending on weight structures. 

Recall that, by a theorem of J. Morgan \cite{m}, the real homotopy type of a quasi-compact K\"ahler manifold is equivalent to the Gysin model with respect to any good compactification, and the Gysin model is finite-dimensional. Nilmanifolds and solvmanifolds also admit finite-dimensional models for the real homotopy types. However, the cohomology jump loci of local systems of rank one are trivial and, respectively, finite in these cases \cite{MP, Mi}. It would be interesting to find other classes of manifolds with non-trivial jump loci and admitting finite models for their real homotopy types. 

{{One can construct examples as in part (2) with $\rho$ not trivial as follows. Take $\rho$ with finite image. Then $\rho$ is trivialized up on a finite abelian Galois cover $X'$ of $X$. Assuming that $X'$ satisfies the condition from part (1), then $\rho$ satisfies the condition from part (2).}}

The above two theorems are both consequences of the following  fact. 

\begin{prop}\label{main0}
Let $\exp: \cc^n\to (\cc^*)^n, (z_i)_{1\leq i\leq n}\mapsto (e^{2\pi iz_i})_{1\leq i\leq n}$ be the exponential map. Suppose $V\subset \cc^n$ and $W\subset (\cc^*)^n$ are irreducible algebraic subvarieties of the same dimension. If $\exp(V)\subset W$, then $V$ is a translate of a linear subspace, and hence $W$ is a translate of a subtorus. 
\end{prop}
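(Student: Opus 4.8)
The plan is to invoke the exponential Ax--Lindemann theorem for $\cc^n$ (in the form due to Ax, and more recently revisited by Pila--Tsimerman and others). That theorem describes the Zariski closure of $\exp(V)$ for an algebraic subvariety $V \subset \cc^n$: the Zariski closure of $\exp(V)$ is a translate of a subtorus, say $t \cdot T$, and moreover the smallest translate of an affine subspace containing $V$ maps onto $t\cdot T$ under $\exp$; equivalently, one has $\dim \overline{\exp(V)}^{\mathrm{Zar}} = \dim A$, where $A$ is the smallest coset of a $\cc$-linear subspace of $\cc^n$ containing $V$. I would cite this as the key input and then run a dimension count.

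Here are the steps in order. First, let $A \subseteq \cc^n$ be the smallest translate of a linear subspace containing $V$, and let $T = \overline{\exp(A)}^{\mathrm{Zar}}$, which by Ax--Lindemann is a translate of a subtorus with $\dim T = \dim A$. Second, from $\exp(V) \subset W$ and $W$ closed and irreducible we get $\overline{\exp(V)}^{\mathrm{Zar}} \subseteq W$; since $V \subseteq A$ we also have $\overline{\exp(V)}^{\mathrm{Zar}} \subseteq \overline{\exp(A)}^{\mathrm{Zar}} = T$. Third, do the dimension estimate: on one hand $\dim W = \dim V$ by hypothesis; on the other hand $\dim V \le \dim A = \dim T$, so
\[
\dim W = \dim V \le \dim A = \dim T.
\]
Fourth, combine: $W \supseteq \overline{\exp(V)}^{\mathrm{Zar}}$ and also $\dim \overline{\exp(V)}^{\mathrm{Zar}} = \dim A$ (the precise Ax--Lindemann conclusion, that $\exp(V)$ is Zariski-dense in $\overline{\exp(A)}^{\mathrm{Zar}}$, using minimality of $A$), hence $\dim W = \dim A = \dim T$ and $T \subseteq W$ forces $W = T$ since both are irreducible of the same dimension and $W$ is closed. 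So $W$ is a translate of a subtorus. Fifth, pull back: $\dim V = \dim W = \dim A$ together with $V \subseteq A$ and both irreducible of the same dimension forces $V = A$, i.e. $V$ is a translate of a linear subspace.

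The main obstacle is making sure I am quoting the exponential Ax--Lindemann statement in exactly the strength needed — specifically the assertion that $\exp(V)$ is Zariski dense in the translated subtorus $\overline{\exp(A)}^{\mathrm{Zar}}$, equivalently the equality $\dim\overline{\exp(V)}^{\mathrm{Zar}} = \dim A$ where $A$ is the minimal coset containing $V$. A weaker "functional transcendence" phrasing only gives the inclusion $\overline{\exp(V)}^{\mathrm{Zar}} \subseteq T$, which is not by itself enough to conclude $W = T$; the dimension equality is what closes the argument. A secondary, purely bookkeeping point is to handle the translation: replacing $V$ by $V - v$ for some $v \in V$ and $W$ by a suitable multiplicative translate reduces to the case where one may assume $A$ passes through the origin, after which $T$ is an honest subtorus; none of the dimension counts are affected, so this is routine.
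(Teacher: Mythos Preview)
Your overall strategy---invoke exponential Ax--Lindemann and run a dimension count---is sound, and the paper explicitly acknowledges that the proposition follows from Ax--Lindemann. The paper, however, takes a completely different route: it gives a self-contained elementary proof by induction on $n$, first proving a covering lemma ($\exp(V)=W$), handling $n=2$ via the lattice action of $\im\big(H_1(W_{\mathrm{reg}},\zz)\to H_1((\cc^*)^2,\zz)\big)$ on $V$, and then reducing higher $n$ by projections and hyperplane slices. The authors say they include this argument because it is short and because pieces of it are reused later. So your approach is not the paper's, but it is a legitimate alternative the paper itself points to.

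That said, there is a genuine error in how you state and use Ax--Lindemann. You take $A$ to be the smallest coset of a \emph{$\cc$-linear} subspace containing $V$ and claim $\dim \overline{\exp(A)}^{\mathrm{Zar}}=\dim A$, and later $\dim\overline{\exp(V)}^{\mathrm{Zar}}=\dim A$. Both fail already for $V=A=\{(z,\alpha z):z\in\cc\}\subset\cc^2$ with $\alpha\notin\qq$: here $\dim A=1$ but $\exp(A)$ is Zariski dense in $(\cc^*)^2$, so $\dim\overline{\exp(A)}^{\mathrm{Zar}}=2$. Subtori correspond to \emph{rational} (i.e.\ $\qq$-defined) linear subspaces, not arbitrary $\cc$-linear ones, so the ``minimal $\cc$-affine hull'' has no direct bearing on the dimension of the Zariski closure downstairs. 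Your chain $\dim W=\dim V\le\dim A=\dim T$ therefore breaks at the last equality, and your step~4 does not follow as written.

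The fix is easy and actually simplifies your argument: drop $A$ entirely. The form of Ax--Lindemann you need is that for irreducible algebraic $V\subset\cc^n$, the Zariski closure $T':=\overline{\exp(V)}^{\mathrm{Zar}}\subset(\cc^*)^n$ is a coset of a subtorus. Since $\exp$ is a local biholomorphism, $\dim T'\ge\dim V$; since $\exp(V)\subset W$ with $W$ closed, $T'\subset W$, so $\dim T'\le\dim W=\dim V$. Hence $\dim T'=\dim W$ and $W=T'$ is a torus coset. Then $\exp^{-1}(W)$ is a countable union of cosets of a rational linear subspace $L$ with $\dim L=\dim W=\dim V$; irreducibility puts $V$ in one such coset, and equality of dimensions forces $V$ to equal that coset. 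If you prefer to keep the ``minimal containing coset'' language, you must take $A$ to be the minimal coset of a $\qq$-defined linear subspace; with that correction your steps go through verbatim.
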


This proposition is an affine analog of \cite[Theorem 1.3(c)]{si}. In fact, when $X$ has a compactification $\bar{X}$ with $H_1(\bar{X}, \zz)=0$ and complement a simple normal crossings divisor, the de Rham moduli space of $(\bar{X}, \bar{X}\setminus X)$ is isomorphic to $\cc^{b_1(X)}$ and the Betti moduli space of $X$ is isomorphic to $(\cc^*)^n$, and the Riemann-Hilbert map is isomorphic to the above exponential map. More precisely, we have the following commutative diagram
$$
\xymatrix{
\cc^{b_1(X)}\ar[r]^-{\cong}\ar[d]^{\exp}&\mdr(\bar{X}/D)\ar[d]^-{RH}\\
(\cc^*)^{b_1(x)}\ar[r]^-{\cong}&\mb(X)
}
$$
where $D=\bar{X}\setminus X$, $\mdr(\bar{X}/D)$ and $\mb(X)$ are the de Rham and Betti moduli spaces, respectively, and $RH$ is the Riemann-Hilbert map. See Section \ref{Kahler} for more details. 

Proposition \ref{main0} is known among the experts. It is implied by the Exponential Ax-Lindemann Theorem due to Ax \cite{Ax}, see \cite[Theorem 1.2]{cll}. A related statement was proved in \cite[Appendix]{bw1} and used there for the main result. Nevertheless, we decided to provide a proof in Section 2, since the proof is quite short and simple, and some of the ideas are used in the last section. 

Following the work of Dimca-Papadima \cite{dp}, it is straightforward to prove Theorem \ref{main2} using Proposition \ref{main0}. We give the argument in Section 2. In Section 3, we prove Theorem \ref{main1}. The proof goes through a non-compact analog of \cite[Theorem 1.3(c)]{si}. More precisely, we prove that Betti-de Rham sets of a 1-Hodge structure are finite unions of translates of subtori. Then we show the cohomology jump loci are Betti-de Rham sets.  We not only need the statement of \cite[Theorem 1.3(c)]{si} in the proof of Theorem \ref{main1}, but also use the ideas in \cite{si} repeatedly throughout the paper. 

Note that, according to Theorem \ref{main2}, we give an alternative proof of Theorem \ref{main1} by showing that every irreducible component of $\Sigma^i_k(X)$ contains a local system  admitting $L$ a finite-dimensional model for its twisted de Rham complex. If $L$ is torsion and $X$ is quasi-compact K\"ahler manifold, then a finite model always exists, by Morgan's theorem  plus a standard argument with finite covers reducing to the case of the trivial rank one local system (see e.g. the last paragraph in the proof of \cite[Theorem 1.3]{w}). However, as noted in Conjecture \ref{conj}, we cannot yet  produce torsion points.

\medskip
\noindent
{\it Acknowledgement.} The first author was sponsored by  FWO, KU Leuven OT, and Methusalem grants.

\section{Affine Betti-de Rham sets}
Before the proof of Proposition \ref{main0}, we give the definition of some terms that we already used in the introduction. Let $\exp: \cc^n\to (\cc^*)^n, (z_i)_{1\leq i\leq n}\mapsto (e^{2\pi iz_i})_{1\leq i\leq n}$ be the exponential map. Let $V\subset\cc^n$ be an irreducible algebraic subvariety. If there exists an algebraic subvariety $W\subset(\cc^*)^n$ such that $\dim(V)=\dim(W)$ and $\exp(V)\subset W$, then $W$ is called an \textbf{irreducible affine Betti-de Rham set}. An analytic subset of $(\cc^*)^n$ is called an \textbf{affine Betti-de Rham set} if it is a finite union of irreducible Betti-de Rham sets of the same dimension. 

Thus, Proposition \ref{main0} is equivalent to the statement that every irreducible affine Betti-de Rham set in $(\cc^*)^n$ is the translate of a subtorus. 

\begin{lemma}\label{cover}
Let $V\subset \cc^n$ and $W\subset (\cc^*)^n$ be closed irreducible analytic varieties of the same dimension. Suppose $\exp(V)\subset W$. Then $\exp(V)=W$. 
\end{lemma}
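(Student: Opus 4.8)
The plan is to show that $\exp(V)$ is already closed in $(\cc^*)^n$, so that the hypotheses $\exp(V) \subset W$ with $W$ irreducible analytic of the same dimension as $V$ force $\exp(V) = W$. The key observation is that $\exp \colon \cc^n \to (\cc^*)^n$ is a covering map, and in particular a proper map onto its image is what we want to establish — but it is not proper, so we cannot conclude closedness cheaply. Instead I would argue as follows. Let $W' = \overline{\exp(V)}$ be the closure of $\exp(V)$ inside $W$; this is a closed analytic subset of $W$, and since $W$ is irreducible and $\dim \exp(V) = \dim V = \dim W$, we get $W' = W$, i.e. $\exp(V)$ is dense in $W$. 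It therefore suffices to prove $\exp(V)$ is closed in $(\cc^*)^n$, equivalently that $\exp(V)$ is closed in $W$.

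For this, first I would reduce to the case where $V$ is not contained in any proper affine-linear subspace of $\cc^n$: if $V \subset a + L$ for a proper linear subspace $L$, one reparametrizes $L \cong \cc^m$ and replaces $(\cc^*)^n$ by the subtorus $\exp(L)$ times the constant $\exp(a)$; since the monodromy lattice of the covering $\exp|_L$ is $L \cap \zz^n$, which may have rank $< m$, one must be a little careful, but the statement only gets easier. Assuming then that $V$ spans $\cc^n$ affinely, the main step is to analyze the closure $\overline{\exp(V)} = W$ by looking at sequences. Take $w \in W$; by density there are $v_k \in V$ with $\exp(v_k) \to w$. Write $v_k = \res(v_k) + \ell_k$ where we try to subtract an element $\ell_k \in \zz^n$ to keep the real part of $v_k$ in a fixed fundamental domain for $\zz^n \subset \cc^n$ in the imaginary directions — that is, arrange $\mathrm{Im}(v_k - \ell_k)$ bounded. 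If the sequence $v_k - \ell_k$ (which still maps to $\exp(v_k)$) stays bounded, a subsequence converges to some $v_\infty \in \overline{V}$; but $V$ is closed, so $v_\infty \in V$ and $w = \exp(v_\infty) \in \exp(V)$, done. So the crux is to rule out that $v_k - \ell_k$ runs off to infinity in the real directions (or that no good choice of $\ell_k$ exists).

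The hard part will be exactly this: controlling the behavior of $V$ near infinity in a strip $\{\,|\mathrm{Im}\, z| \le C\,\}$. The right tool is that $V$ is algebraic (Chevalley / Remmert: the closure of $\exp(V)$ is algebraic, and one can also pass to the algebraic closure $\overline{V}^{\mathrm{Zar}} = V$ in $\cc^n$), so $V$ meets such a strip in a set whose intersection with $\{\mathrm{Re}\, z \text{ large}\}$ is governed by the asymptotics of the defining polynomials; more usefully, one can use the curve selection lemma: if $v_k - \ell_k \to \infty$, then (after passing to a subsequence and using that $V$ is algebraic) there is a real-analytic, indeed algebraic, curve $\gamma(t)$ in $V$ going to infinity with $\mathrm{Im}\,\gamma(t)$ bounded, along which $\exp(\gamma(t))$ converges. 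One then derives a contradiction with the assumption $\dim V = \dim W$: the existence of such a ``direction at infinity'' along which $\exp$ degenerates forces $W = \overline{\exp(V)}$ to have dimension strictly smaller than $\dim V$, because the fibers of $\exp|_V$ over a generic point of $W$ would then be positive-dimensional (the limiting curve, translated back by lattice elements, produces a one-parameter family in $V$ mapping to a single point of $W$). Making this dimension count rigorous — relating the non-properness of $\exp|_V$ to a drop in dimension of the image — is the technical heart; I expect it to run through the fact that a dominant morphism of irreducible varieties of the same dimension is generically finite, hence generically proper, applied after taking Zariski closures and using that $\exp$ restricted to a bounded-imaginary-part region is finite-to-one.
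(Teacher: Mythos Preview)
Your sequence argument has a genuine gap. When you write $v_k - \ell_k$ with $\ell_k \in \zz^n$ and assert that a subsequential limit lies in $\overline{V}=V$, you are implicitly assuming that $v_k - \ell_k \in V$. But there is no reason the lattice translate of a point of $V$ should remain in $V$; indeed, whether $V$ is invariant under any nontrivial sublattice of $\zz^n$ is precisely the sort of structural information the lemma is meant to extract. All you know is $v_k - \ell_k \in \exp^{-1}(W)$, so the limit lies in $\exp^{-1}(W)$, which gives back only $w\in W$. Your subsequent ``hard part'' does not repair this, and it also invokes algebraicity of $V$ (curve selection for polynomials, Zariski closures), whereas the lemma is stated only for closed irreducible \emph{analytic} subvarieties; no algebraic hypothesis is available.

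The paper's proof sidesteps all of this by working on the covering side rather than trying to prove $\exp(V)$ is closed. Set $\widetilde{W}=\exp^{-1}(W)$; since $\exp:\cc^n\to(\cc^*)^n$ is a covering, $\widetilde{W}$ is closed analytic and $\exp:\widetilde{W}\to W$ is a covering. The deck group $\zz^n$ permutes the irreducible components of $\widetilde{W}$ transitively (they are indexed by the cokernel of $H_1(W_{\mathrm{reg}},\zz)\to H_1((\cc^*)^n,\zz)$), so every component has the same image under $\exp$, namely all of $W$. Now $V\subset\widetilde{W}$ and $V$ is irreducible, hence $V$ lies in a single component $\widetilde{W}_1$; since both are closed irreducible analytic of the same dimension, $V=\widetilde{W}_1$, and therefore $\exp(V)=\exp(\widetilde{W}_1)=W$. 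No sequences, no properness, no algebraicity needed.
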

\begin{proof}
Let $\widetilde{W}=\exp^{-1}(W)$. Then $\exp: \widetilde{W}\to W$ is a covering map. Since $W\subset (\cc^*)^n$ is a closed analytic subvariety, $\widetilde{W}$ is a closed analytic subset consisting of possibly infinitely many irreducible analytic varieties. Let $W_{\text{reg}}$ be the smooth locus of $W$, and let $\widetilde{W}_{\text{reg}}=\exp^{-1}(W_{\text{reg}})$. Since irreducible components of an analytic variety correspond to the connected components of the smooth locus of the analytic variety, the irreducible components of $\widetilde{W}$ are parametrized by the cokernel of  the homomorphism $H_1(W_{\text{reg}}, \zz)\to H_1((\cc^*)^n, \zz)$, induced by the composition of embeddings $W_{\text{reg}}\hookrightarrow W\hookrightarrow (\cc^*)^n$. Moreover, any two irreducible components of $\widetilde{W}$ differ by a translation by a lattice point in $\cc^n$. Thus, for any irreducible components $\widetilde{W}_0$ and $\widetilde{W}_0'$, we have $\exp(\widetilde{W}_0)=\exp(\widetilde{W}_0')$, and hence, $\exp(\widetilde{W}_0)=W$ for every irreducible component $\widetilde{W}_0$ of $\widetilde{W}$.

Since $\exp(V)\subset W$, one  has $V\subset \widetilde{W}$. Hence, $V$ is contained in one of the irreducible components of $\widetilde{W}$, which we denote by $\widetilde{W}_1$. Now, $V\subset \widetilde{W}_1$. Moreover, $V$ and $\widetilde{W}_1$ are both closed irreducible analytic varieties of the same dimension. Therefore $V=\widetilde{W}_1$, and hence the lemma follows. 
\end{proof}

\begin{proof}[Proof of Proposition \ref{main0}]
The proposition is trivial for $n=1$. We first prove the proposition for the case $n=2$. Then we will prove the proposition for general $n$ by induction. 

When $n=2$, the only nontrivial case is when $V$ is of dimension one. Let $\widetilde{W}=\exp^{-1}(W)$. According to Lemma \ref{cover}, $\widetilde{W}$ is a  union of possibly infinitely many closed irreducible analytic varieties, and $V$ is equal to one irreducible component of $\widetilde{W}$. Let $f: H_1(W_{\text{reg}}, \zz)\to H_1((\cc^*)^2, \zz)$ be the map on homology induced by the embedding $W_{\text{reg}}\hookrightarrow (\cc^*)^2$. Then $\im(f)$ acts on each of the irreducible components of $\widetilde{W}$, and hence on $V$.

\begin{claim}
$\im(f)$ has rank at least one. 
\end{claim}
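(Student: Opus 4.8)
The plan is to argue by contradiction. Since $H_1((\cc^*)^2,\zz)\cong\zz^2$ is torsion-free, the condition that $\im(f)$ has rank zero is equivalent to $f=0$, so it suffices to show that $f=0$ forces $W$ to be a single point, contradicting $\dim W=1$. First I would unwind what $f=0$ means: it says that the composite $W_{\text{reg}}\hookrightarrow W\hookrightarrow(\cc^*)^2$ kills $H_1(-,\zz)$, so every loop $\gamma$ in $W_{\text{reg}}$ becomes null-homologous in $(\cc^*)^2$. Consequently, for every closed holomorphic $1$-form $\omega$ on $(\cc^*)^2$ one has $\oint_\gamma\omega=0$; I apply this to the two logarithmic forms $dw_1/w_1$ and $dw_2/w_2$, where $w_1,w_2$ are the coordinate functions on $(\cc^*)^2$.

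Next I would pass to a compactification of $W$. Let $\nu\colon W^\nu\to W$ be the normalization and let $\overline W$ be the smooth projective curve having $W^\nu$ as a dense open subset; then $W_{\text{reg}}$ is $\overline W$ with a finite set of points $P_1,\dots,P_m$ removed (the points at infinity together with the points over $\mathrm{Sing}(W)$). Because $W\subset(\cc^*)^2$, the functions $w_1,w_2$ have neither a zero nor a pole on $W_{\text{reg}}$, so each $w_j$ extends to a meromorphic function $\overline{w_j}$ on $\overline W$ whose divisor is supported on $\{P_1,\dots,P_m\}$. For a small positively oriented loop $\gamma_i$ around $P_i$ inside $W_{\text{reg}}$,
\[
\frac{1}{2\pi i}\oint_{\gamma_i}\frac{dw_j}{w_j}=\mathrm{ord}_{P_i}\bigl(\overline{w_j}\bigr).
\]
By the previous paragraph the left-hand side vanishes, so $\mathrm{ord}_{P_i}(\overline{w_j})=0$ for all $i$ and $j$; hence $\mathrm{div}(\overline{w_j})=0$, so $\overline{w_j}$ is a nowhere-vanishing holomorphic function on the compact connected Riemann surface $\overline W$, hence constant. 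Thus $w_1$ and $w_2$ are both constant on $W$, i.e. $W$ is a single point, contradicting $\dim W=1$. Therefore $\im(f)$ has rank at least one.

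The only delicate point I expect is the bookkeeping around the compactification in the second step: one must check that the divisor of $\overline{w_j}$ is concentrated at the removed points (this is exactly where the hypothesis $W\subset(\cc^*)^2$ is used) and that the period of $dw_j/w_j$ around such a point computes the order of vanishing there; everything else is formal. Alternatively, one can route the argument through Lemma~\ref{cover}: if $\im(f)=0$, then by the covering description there, $\exp$ restricts to a biholomorphism from a dense open subset of $V$ onto $W_{\text{reg}}$, hence induces an isomorphism $\overline V\cong\overline W$ of smooth projective models; on this common compact curve the coordinate $z_j$ is meromorphic, so $\tfrac{1}{2\pi i}\,dw_j/w_j=dz_j$ as meromorphic $1$-forms, and comparing residues (those of $dz_j$ all vanish, those of $\tfrac{1}{2\pi i}\,dw_j/w_j$ are the orders of $w_j$) again forces each $w_j$ to be constant — the same computation in different coordinates.
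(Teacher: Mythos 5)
Your argument is correct, and it is a genuinely different packaging of the same underlying phenomenon. You prove the contrapositive: assuming $f=0$ (equivalent to rank zero by torsion-freeness), you pass to the normalization and smooth projective model $\overline W$, extend the coordinates $w_1,w_2$ meromorphically, and use the residue computation $\frac{1}{2\pi i}\oint_{\gamma_i}dw_j/w_j=\mathrm{ord}_{P_i}(\overline{w_j})$ together with the vanishing of all periods to conclude that each $\overline{w_j}$ has trivial divisor, hence is constant, contradicting $\dim W=1$. The paper instead argues directly: it compactifies $(\cc^*)^2$ as $\pp^2$ minus three lines, picks a point where the closure of the curve meets a boundary line, and asserts that a small loop around such a point is nontrivial in $H_1((\cc^*)^2,\zz)$. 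The common mechanism in both proofs is that winding numbers of loops around boundary points of the compactified curve compute orders of the coordinate functions there; your version has the advantage of handling singular or non-reduced boundary behavior cleanly (via the normalization and residues) and of making explicit the step the paper leaves as an assertion (why the small boundary loop maps to a nonzero class), at the cost of invoking the standard machinery of smooth projective models and constancy of holomorphic functions on compact Riemann surfaces. One point worth making explicit in your write-up: the meromorphic (rather than essentially singular) extension of $w_j$ to $\overline W$ uses that $W$ is \emph{algebraic}, which is a hypothesis of Proposition \ref{main0}; for merely analytic $W$ the argument would break down, consistent with the paper's later use of essential singularities in Case 2 of the proof of Theorem \ref{structurebdr}. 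Your alternative sketch via Lemma \ref{cover} is plausible but sketchier; the main argument stands on its own and does not need it.
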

\begin{proof}[Proof of Claim]
We can consider $(\cc^*)^2$ as $\pp^2$ removing three lines $L_1$, $L_2$ and $L_3$. Let $\bar{V}$ be the closure of $V$ in $\pp^2$. Clearly, $\bar{V}$ intersects each of $L_1$. Choose a point $x\in \bar{V}\cap L_1$. Take a small loop $\gamma$ in $V$ around $x$. (When $x$ is a singular point, we can still make such a choice, but no longer unique up to homotopy.) Since $\gamma$ represents a nontrivial loop in $H_1((\cc^*)^2, \zz)$, $f([\gamma])\in H_1((\cc^*)^2, \zz)$ is non-zero. Since $H_1((\cc^*)^2, \zz)$ is torsion free, $\im(f)$ has rank at least one. 
\end{proof}
The group $H_1((\cc^*)^2, \zz)=\pi_1((\cc^*)^2)$ acts on $\cc^2$ by translations. Now, the subgroup $\im(f)$ acts on $\cc^2$, which preserves $V$. Since $\im(f)$ has rank at least one, $V$ contains infinitely many points on an affine line. Since $V$ is a closed irreducible algebraic variety of dimension one, $V$ must be equal to that affine line. Hence $W$ must be a translate of subtorus. 

Next, we prove the proposition by induction on $n$ ($n\geq 3$). Suppose $W$ is not of codimension one, or equivalently $V$ is not of codimension one. Take a general projection of algebraic groups $p: (\cc^*)^n\to (\cc^*)^{n-1}$, such that $\dim(W)=\dim(p(W))$. Let the induced map on universal cover be $\tilde{p}: \cc^n\to \cc^{n-1}$. By Lemma \ref{cover}, $\exp(V)=W$, and hence $\exp(\tilde{p}(V))=p(W)$. 
Since the exponential map is continuous, $\exp\left(\overline{\tilde{p}(V)}\right)\subset \overline{p(W)}$. Notice that $\overline{p(W)}$ is a closed algebraic variety, because of the hypothesis that $W$ is algebraic. Clearly, $\exp(\tilde{p}(V))$ is Zariski dense in $\exp\left(\overline{\tilde{p}(V)}\right)$, and $p(W)$ is Zariski dense in $\overline{p(W)}$. Thus, 
$$\dim \exp\left(\overline{\tilde{p}(V)}\right)=\dim \exp(\tilde{p}(V))=\dim p(W)=\dim \overline{p(W)}.$$

We obtain irreducible subvarieties $\overline{\tilde{p}(V)}\subset \cc^{n-1}$ and $\overline{p(W)}\subset (\cc^*)^{n-1}$ of the same dimension. Moreover, $\exp\left(\overline{\tilde{p}(V)}\right)\subset \overline{p(W)}$. Therefore, the induction hypothesis implies that $\overline{\tilde{p}(V)}$ is a translate of a linear subspace. This is true for any projection $p: (\cc^*)^n\to (\cc^*)^{n-1}$ such that $\dim p(W)=\dim W$. Taking two distinct such projections, then $V$ is contained in two translates of linear subspaces $V'$ and $V''$ with $\dim V'=\dim V''=\dim V+1$. Therefore, $V=V'\cap V''$ has to be a translate of a linear subspace in $\cc^n$. 


Suppose $W$ is of codimension one. Take any affine embedding $j: (\cc^*)^{n-1}\to (\cc^*)^n$; that is, $j$ is the composition of an algebraic group embedding and a translation in $(\cc^*)^n$. Let $T=\im(j)$ and let $S=\exp^{-1}(T)$. Then $S$ is the union of infinitely many parallel hyperplanes in $\cc^n$. By Lemma \ref{cover}, $\exp(V)=W$. Hence $\exp(S\cap V)=T\cap W$. Suppose $T$ intersects $W$ properly, i.e., $T\cap W\neq \emptyset$ and $T\nsubseteq W$. Then $T\cap W$ is of dimension $n-2$. Notice that $S\cap V$ consists of at most countably many closed subvarieties of $V$, and $\exp(S\cap V)=T\cap W$. Since an analytic variety can not be covered by countably many subvarieties of smaller dimension, there exists an irreducible component of $S\cap V$ of dimension $n-2$, which we denote by $V'$. Since $V'$ is irreducible, $\exp(V')$ is contained in one of the irreducible components of $T\cap W$, which we denote by $W'$. Since $\dim(V')=\dim(W')=n-2$, by Lemma \ref{cover}, $\exp(V')=W'$. Then by induction hypothesis, $W'$ is a translate of a subtorus and $V'$ is an affine subspace of $\cc^n$. 

Since $V$ is irreducible and since $\dim V\geq 2$, for a general affine hyperplane $H\subset \cc^n$, $V\cap H$ is irreducible. Therefore, for a general affine hyperplane $H$ in $\cc^n$, whose slope is rational, $H\cap V$ is an affine subspace of $\cc^n$. Thus, $V$ has to be an affine subspace of $\cc^n$, and $W$ is a translate of a subtorus in $(\cc^*)^n$. 
\end{proof}



\begin{cor}\label{germtori}
Suppose $(\fV, 0)$ and $(\fW, 1)$ are analytic germs of two algebraic sets in $\cc^n$ and $(\cc^*)^n$, respectively. If the exponential map $\exp: \cc^n\to (\cc^*)^n$ induces an isomorphism between $(\fV, 0)$ and $(\fW, 1)$, then $(\fW, 1)$ is the germ of a finite union of subtori. 
\end{cor}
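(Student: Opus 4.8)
The plan is to reduce the germ statement to the global Proposition \ref{main0} by passing to the finitely many irreducible components of $V$ and $W$ through the base points. Since $(\fV,0)$ and $(\fW,1)$ are germs of algebraic sets, I may replace $V$ by the union of its irreducible components containing $0$ and $W$ by the union of its irreducible components containing $1$ without changing either germ; the components of $W$ through $1$ are exactly those of the replaced $W$, so it suffices to show this new $W$ is a finite union of subtori. Thus I would assume that every irreducible component of $V$ contains $0$, every irreducible component of $W$ contains $1$, and that $\exp$ restricts to a biholomorphism of a neighborhood $U_0$ of $0$ onto a neighborhood $U_1$ of $1$ carrying $V\cap U_0$ onto $W\cap U_1$.

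Next, I would fix an irreducible component $V_i$ of $V$ and show $\exp(V_i)$ lies in a single irreducible component of $W$. Since $\exp^{-1}(W)$ is a closed analytic subset of $\cc^n$ and contains $V_i\cap U_0$, the closed analytic set $V_i\cap\exp^{-1}(W)$ attains full dimension at $0$; as $V_i$ is irreducible this forces $V_i\subseteq\exp^{-1}(W)$, i.e. $\exp(V_i)\subseteq W$. Writing $W=\bigcup_j W_j$ and $V_i=\bigcup_j\big(V_i\cap\exp^{-1}(W_j)\big)$, the irreducibility of $V_i$ (via a Baire-category argument) yields a single index $j(i)$ with $V_i\subseteq\exp^{-1}(W_{j(i)})$, so $\exp(V_i)\subseteq W_{j(i)}$ and in particular $1=\exp(0)\in W_{j(i)}$.

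Then I would do the dimension matching. For each component $W_j$ of $W$ one has $(W_j,1)\subseteq(\fW,1)=\exp\big((\fV,0)\big)=\bigcup_i \exp\big((V_i,0)\big)$, and since $\exp$ preserves local dimension near $0$, a finite-union dimension count produces some $V_i$ with $\dim\!\big(\exp((V_i,0))\cap(W_j,1)\big)=\dim W_j$. Because $\exp(V_i)\subseteq W_{j(i)}$ and distinct components $W_j,W_{j(i)}$ meet in dimension $<\dim W_j$, we must have $j(i)=j$, and hence $\dim V_i=\dim W_j$. Now Proposition \ref{main0} applies to $\exp(V_i)\subseteq W_j$: the irreducible algebraic set $W_j$ is a translate of a subtorus, and since $1\in W_j$ it is a subtorus. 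Therefore the (replaced) $W$ is a finite union of subtori, so the original germ $(\fW,1)$ is the germ of a finite union of subtori.

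The step I expect to be the main obstacle is this dimension matching, precisely because $\exp^{-1}(W)$ is not algebraic, so Proposition \ref{main0} cannot be applied to preimages and one is forced to confront the germ $(W_j,1)$ with the images of the \emph{global} components $V_i$, keeping careful track that the $V_i$ realizing the full dimension $\dim W_j$ is the one whose global image lies in $W_j$. The supporting facts needed along the way — that a closed analytic subset of an irreducible analytic space attaining full dimension at a point is the whole space, and that the germ at a point of a reducible algebraic set is the union of the germs of its components through that point — are standard, but they should be invoked cleanly.
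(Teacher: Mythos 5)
Your proposal is correct and takes essentially the same route as the paper's proof: replace $V$ and $W$ by the unions of their components through the base points, match each component $W_j$ of $W$ with a component $V_i$ of $V$ of the same dimension satisfying $\exp(V_i)\subseteq W_j$, and apply Proposition~\ref{main0} (noting $1\in W_j$ upgrades ``translate of a subtorus'' to ``subtorus''). The only difference is cosmetic: the paper finds the matching $V_i$ by observing that $\exp(V_i)$ contains a nonempty analytic open subset of $W_j$, whereas you locate it by a germ-dimension count over the finite union, which amounts to the same thing.
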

\begin{proof}
Let $V$ and $W$ be the analytic closure of $(\fV, 0)$ and $(\fW, 1)$ in $\cc^n$ and $(\cc^*)^n$, respectively. Since $(\fV, 0)$ and $(\fW, 1)$ are germs of algebraic sets, $V$ and $W$ are algebraic sets of $\cc^n$ and $(\cc^*)^n$, respectively. Moreover, $(\fV, 0)$ is equal to the germ of $V$ at $0$ and similarly $(\fW, 1)$ is equal to the germ of $W$ at $1$. Since $\exp(\fV)=\fW$, we deduce that $\fW\subset W$, and since $W$ is analytically closed, we have $\exp(V)\subset W$. 

Let $W_1$ be an irreducible component of $W$. Since $W$ is the analytic closure of $\fW$, $W_1\cap \fW$ is nonempty and contains an analytic open subset of $W_1$. Since $\exp(\fV)=\fW$, there exists an irreducible component of $V$, denoted by $V_1$, such that $\exp(V_1)$ contains a nonempty analytic open subset of $W_1$. In particular, $\dim V_1\geq \dim W_1$. Since $V_1$ is irreducible, $\exp(V_1)$ is also irreducible. Notice that $\exp(V_1)\subset \exp(V)\subset W$. Hence $\exp(V_1)$ is contained in one of the irreducible components of $W$, which is clearly $W_1$. Thus, we have shown $\exp(V_1)\subset W_1$ and $\dim V_1= \dim W_1$.

Now, we have algebraic subvarieties $V_1\subset \cc^n$ and $W_1\subset (\cc^*)^n$ of the same dimension, and $\exp(V_1)\subset\exp(W_1)$. Therefore, by Proposition \ref{main0}, $W_1$ is a torus. Since $W_1$ can be chosen as any irreducible component of $W$, the variety $W$ is a union of subtori, and $\fW$ is the germ of a finite union of subtori. 
\end{proof}

Before proving Theorem \ref{main2}, recall that when $X$ is a connected smooth manifold, $\mb(X)$ is  isomorphic to the space of  complex flat line bundles on $X$. There is an exponential map from $H^1(X, \cc)$ to the space of flat connections, mapping the origin to the trivial flat connection. Thus, there exists a complex Lie group map from $H^1(X, \cc)$ to $\mb(X)$, which we call the \textbf{exponential map} and denote by $\exp$. 

We also need some definitions and results from \cite{dp}. A differential algebra $(\sA^\ubul, d)$ over $\cc$ is called connected if $\sA^0=\cc\cdot\id$. Suppose $(\sA^\ubul, d)$ is connected and suppose $\sA^i$ is finite-dimensional for every $i$. The \textbf{space of flat connections} is defined to be 
$$\sF(\sA^\ubul)=\{\omega\in \sA^1| d\omega=0\}.$$
Since $\sA^\ubul$ is connected, the first differential $d^0: \sA^0\to \sA^1$ is zero, and hence $\sF(\sA^\ubul)\cong H^1(\sA^\ubul, d)$. We will consider $\sF(\sA^\ubul)$ as an affine variety with an origin, instead of a vector space. Given an element $\omega\in \sF(\sA^\ubul)$, the operator $d_\omega\stackrel{\textrm{def}}{=}d+\omega$ defines another differential on the graded vector space $\sA^\ubul$, since $(d+\omega)^2=0$. We call the cochain complex $(\sA^\ubul, d_\omega)$ the \textbf{Aomoto complex associated to} $\omega$. 

The \textbf{ resonance variety} of $\sA^\ubul$ is defined to be
$$\sR^i_k(\sA^\ubul)=\{\omega\in \sF(\sA^\ubul)| \dim H^i(\sA^\ubul, d_\omega)\geq k\}. $$
The  resonance varieties are closed algebraic subsets of $\sF(\sA^\ubul)$. In fact, they are defined by certain determinantal ideals, see loc. cit. or \cite{bw2}. 

\begin{theorem}{\cite[Theorem B(2)]{dp}}\label{germ}
Suppose $X$ is a connected topological space, homotopy equivalent to a finite CW-complex. Suppose $\odr^\ubul(X)$ is homotopy equivalent to a connected  differential  algebra $\sA^\ubul$, with $\dim \sA^i<\infty$ for every $i$. Then the composition $\exp\circ \phi: \sF(\sA^\ubul)\to \mb(X)$ induces an isomorphism on the analytic germ of $\sR_k^i(\sA^\ubul)$ at the origin and the analytic germ of $\Sigma^i_k(X)$ at the trivial character $\mathbf{1}$, where $\exp: H^1(X, \cc)\to \mb(X)$ is the exponential map and $\phi: \sF(\sA^\ubul)\to H^1(X, \cc)$ is the isomorphism induced by the homotopy equivalence between $\sA^\ubul$ and $\odr^\ubul(X)$.
\end{theorem}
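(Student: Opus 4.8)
The plan is to compare the germ of $\sR^i_k(\sA^\ubul)$ at the origin with the germ of $\Sigma^i_k(X)$ at $\mathbf 1$ through their formal completions. As recalled before the statement, $\sR^i_k(\sA^\ubul)$ is cut out in $\sF(\sA^\ubul)$ by the determinantal ideals attached to the cohomology of the complex of free $\sO_{\sF(\sA^\ubul)}$-modules $(\sA^\ubul\otimes_\cc\sO_{\sF(\sA^\ubul)},\,d+\omega)$, with $\omega$ the tautological section; after truncating above degree $i+1$ this is a bounded complex of finite free modules, so its $i$-th cohomology is coherent and the defining ideals are recovered from their completions at $0$. Similarly, near $\mathbf 1$ the locus $\Sigma^i_k(X)$ is cut out by the analogous determinantal ideals of the cellular cochain complex of a finite CW model of $X$ with coefficients in the universal rank-one local system, again a bounded complex of finite free $\sO_{\mb(X)}$-modules; and since $H_1(X,\zz)$ is finitely generated, $\exp\circ\phi$ is a local analytic isomorphism of $(\sF(\sA^\ubul),0)$ onto $(\mb(X),\mathbf 1)$. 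Because analytic local rings are Noetherian and completion is faithfully flat, it is then enough to show that the completions at $0$ of the ideals defining $\sR^i_k(\sA^\ubul)$ and $(\exp\circ\phi)^{-1}\Sigma^i_k(X)$ agree.

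To prove that, I would identify the two completed twisted complexes using the twisted de Rham theorem together with a perturbation lemma. Let $\widehat{\sO}$ be the completed local ring at $0\in\sF(\sA^\ubul)\cong H^1(X,\cc)$. The homotopy equivalence of the hypothesis is a zig-zag of quasi-isomorphisms of connected CDGA's passing through the minimal Sullivan model $\sM^\ubul$ of $X$, and, by the de Rham theorem with coefficients in a flat line bundle, for each closed $1$-form $\omega'$ the complex $(\odr^\ubul(X),d+\omega')$ is a model for $H^\ubul(X,L_{\exp[\omega']})$, the difference between $\omega'$ and another representative being absorbed by a gauge transformation $e^\gamma$ that varies algebraically with the parameter. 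The perturbation lemma is: if $f\colon\sB^\ubul\to\sC^\ubul$ is a quasi-isomorphism of bounded connected CDGA's that are finite-dimensional in each degree, then, after completing and twisting by the tautological $1$-form, the induced map is a quasi-isomorphism of complexes of finite free $\widehat{\sO}$-modules, because its mapping cone is then a bounded complex of finite free $\widehat{\sO}$-modules whose special fibre is the acyclic complex $\mathrm{Cone}(f)$, and over a local ring a bounded complex of finite free modules with acyclic special fibre is split acyclic (induct from the top degree: the last differential is surjective by Nakayama and its kernel is free, split it off). Chaining these identifications through $\sM^\ubul$ and through $\odr^\ubul(X)$ — the infinite-dimensional terms being dealt with by first truncating against a finite model of the $(i+1)$-type of $X$, or by the same argument run over $\widehat{\sO}$ with control of the relevant spectral sequence — gives an isomorphism between the completed twisted $\sA^\ubul$-complex and the completed universal-local-system cochain complex of $X$. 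Their $i$-th cohomologies therefore agree, and with them the completions of the ideals in question.

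The step I expect to be the main obstacle is precisely this passage between the topology of $X$ and its algebraic model in the twisted setting. The perturbation and Nakayama arguments are clean only for complexes that are bounded and finite-dimensional in each degree, whereas $\odr^\ubul(X)$ and the minimal model $\sM^\ubul$ are infinite-dimensional in each degree — and $\sM^\ubul$ need not even be of finite type, although $X$ admits a finite model — so the homotopy equivalence of the hypothesis does not link $\sA^\ubul$ to the topology through finite objects. Resolving this requires reducing both sides, up to degree $i+1$, to honestly finite-dimensional perfect complexes using that $X$ is homotopy equivalent to a finite CW complex, or else carrying out the comparison over the complete local ring $\widehat{\sO}$ and checking that the spectral sequence of the adic filtration converges; in either route the finite-dimensionality hypothesis is exactly what makes the relevant completed complexes finite free, so that acyclicity of the special fibre propagates. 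A routine but necessary companion task is to verify that the $1$-form representatives and the gauge transformations can be chosen to depend algebraically on the parameter, so that the comparisons are morphisms of complexes of modules over the base rather than merely fibrewise statements.
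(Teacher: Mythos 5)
First, note that the paper does not prove this statement at all: Theorem \ref{germ} is quoted verbatim from \cite[Theorem B(2)]{dp} (with its rank-one/arbitrary-$\rho$ analogue taken from \cite[Theorem 7.2]{bw2}), so the only fair comparison is with the cited proof. Within your proposal, the part that works is the comparison between two \emph{finite-dimensional} connected models: the determinantal (jump) ideals of the completed tautological Aomoto complex are indeed well behaved, and your cone-plus-Nakayama lemma (a bounded complex of finite free modules over a local ring with acyclic special fibre is split acyclic) correctly shows that a quasi-isomorphism of connected CDGAs that are finite-dimensional in each degree induces equal resonance germs at $0$.

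The genuine gap is the step you yourself flag as ``the main obstacle'', and it is not a technicality to be patched but the entire content of the theorem: linking the germ of $\Sigma^i_k(X)$ at $\mathbf{1}$ (computed by the finite cellular cochain complex over $\mb(X)$) to the completed Aomoto complex of something in the weak-equivalence class of $\sA^\ubul$. The zig-zag of quasi-isomorphisms joining $\sA^\ubul$ to $\odr^\ubul(X)$ necessarily passes through algebras that are infinite-dimensional in each degree (already the degree-one part of the minimal model is infinite-dimensional for, say, free $\pi_1$), so your key lemma is unavailable precisely at the links where it is needed; the remedies you name (truncating against a finite model of the $(i{+}1)$-type, or an adic-filtration spectral sequence with a convergence check) are not carried out, and the fibrewise twisted de Rham theorem with gauge transformations ``depending algebraically on the parameter'' does not by itself produce a morphism of complexes of $\widehat{\sO}$-modules relating the completed universal local-system complex to the completed Aomoto complex — it only gives isomorphisms fibre by fibre, which is strictly weaker than what the jump ideals require. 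The cited proof avoids exactly this trap: Dimca--Papadima (and, in the pair setting, \cite{bw2}) do not perturb the full de Rham complex; they work with Goldman--Millson-style deformation theory, representing the germ of $\mb(X)$ at $\mathbf{1}$ together with its cohomology jump subgerms by functors of Artin rings attached to any CDGA/DGLA model, and prove quasi-isomorphism invariance of these functors at that level, where no finite-freeness of the model is needed. Without supplying that functorial input (or an honest finite perfect replacement of the universal twisted complex compatible with the whole zig-zag), your argument proves only that any two finite-dimensional models have the same resonance germs, not the asserted isomorphism of germs between $\sR^i_k(\sA^\ubul)$ and $\Sigma^i_k(X)$.
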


Here $\odr^\ubul(X)$ is, as in the Introduction, Sullivan's differential algebra of piecewise smooth $\bC$-forms on  $X$. It  can be replaced by the de Rham complex of $X$ if $X$ is a smooth manifold. 

\begin{proof}[Proof of part (1) of Theorem \ref{main2}]
As in the above theorem, the composition $\exp\circ \phi: \sF(\sA^\ubul)\to \mb(X)$ has image in the connected component of $\mb(X)$ containing $\mathbf{1}$, and it is isomorphic to the exponential map $\exp: \cc^{b_1(X)}\to (\cc^*)^{b_1(X)}$, if we replace $\mb(X)$ by that connected component. Therefore, the theorem follows immediately from Theorem \ref{germ} and Corollary \ref{germtori}. 
\end{proof}

The theory of differential graded Lie algebra pairs in \cite{bw2} (or in the case of rank one flat bundles, commutative differential graded algebra pairs) allows us to move away from origin. To study the deformation theory of the cohomology jump loci at a general point $\rho\in \mb(X)$, we need not only the differential algebra $\odr^\ubul(X)$, but also $\odr^\ubul(L_\rho)$ considered as a module over $\odr^\ubul(X)$. This allows us to give a generalized version of Theorem \ref{main2} as below, using the definition of homotopy equivalence of pairs from \cite{bw2}.

\begin{proof}[Proof of part (2) of Theorem \ref{main2}]
The proof is same as the proof of part (1) of Theorem \ref{main2}, except we need to replace Theorem \ref{germ} by Theorem 7.2 of \cite{bw2}. Note that in \cite{bw2} we worked only with smooth manifolds instead of CW-complexes of finite type and with $\odr^\ubul$ being the de Rham complex of smooth forms. However, it is known that every finite CW-complex is homotopy equivalent to an open submanifold of an Euclidean space (see e.g. \cite[Corollary A.10]{h}). 
\end{proof}


\section{Betti-de Rham sets of 1-Hodge structures}
In this section, we define Betti-de Rham sets in a more general setting, which combines the affine Betti-de Rham sets and the Betti-de Rham sets of Simpson \cite{si}. They are analytic subsets of some abelian complex Lie groups. We will prove that any irreducible Betti-de Rham set is a translate of a Lie subgroup. 

Let $X$ be a quasi-compact K\"ahler manifold. According to \cite{deligne}, the group $H^1(X, \zz)$ admits a mixed Hodge structure of type $\{(1,0), (0,1), (1,1)\}$. Such a Hodge structure is called a 1-Hodge structure in \cite{a}. More precisely, a 1-Hodge structure consists of the following data:
\begin{enumerate}
\item A finitely generated abelian group $\Lambda$;
\item A subspace $W=W_1\subset \Lambda_{\bQ}$;
\item A subspace $F=F^1\subset \Lambda_{\bC}$;
\end{enumerate}
such that $W_\bC=(W_\bC\cap F)\oplus (W_\bC\cap \bar{F})$ and $\Lambda_\bC=W_\bC+F$. For simplicity, we will assume that the group $\Lambda$ is torsion free in this section. 

A subgroup $\Lambda'\subset\Lambda$ defines a sub 1-Hodge structure, if $\Lambda/\Lambda'$ is torsion free and $(\Lambda', W', F')$ is a 1-Hodge structure where $W'=W\cap \Lambda'_\bQ$, $F'=F\cap \Lambda'_\bC$. It is easy to check that given a sub 1-Hodge structure $(\Lambda', W', F')$ of $(\Lambda, W, F)$, the quotients $(\Lambda/\Lambda', W/W', F/F')$ has a natural 1-Hodge structure. In fact, this also follows from the general theory of Deligne \cite{deligne} that the mixed Hodge structures form an abelian category. 

Given a 1-Hodge structure $(\Lambda, W, F)$, let $\Lambda_0=\Lambda\cap W$. As the usual convention for 1-Hodge structures, we let $H^{0,1}=W_\bC\cap \bar{F}$, $H^{1, 0}=W_\bC\cap F$ and $H^{1,1}=\Lambda_\bC/W_\bC$. The complex Lie groups $\Lambda_\bC/\Lambda_0$ and $\Lambda_\bC/\Lambda$ are analogs of de Rham and Betti moduli spaces. The natural map $\rh: \Lambda_\bC/\Lambda_0\to \Lambda_\bC/\Lambda$, defined by taking further quotient, is the analog of the Riemann-Hilbert correspondence map. We call this map $\rh$ the Hodge-theoretic Riemann-Hilbert map. In general, $\Lambda_\bC/\Lambda_0$ will not have a natural algebraic variety structure. However, we will define a class of closed analytic subsets of $\Lambda_\bC/\Lambda_0$, which will define an analog of algebraic Zariski topology on $\Lambda_\bC/\Lambda_0$. In particular, when the 1-Hodge structure $(\Lambda, W, F)$ is the 1-Hodge structure of a smooth quasi-projective variety $X$, this topology will be same as the algebraic Zariski topology on the de Rham moduli space (see Section 4 for the latter).

Since $W_\bC=(W_\bC\cap F)\oplus (W_\bC\cap \bar{F})$ and since $\Lambda_\bC=W_\bC+F$, we have canonical isomorphisms $H^{0,1}\cong W_\bC/(W_\bC\cap F)\cong \Lambda_\bC/F$. Thus we have a canonical surjective map $p: \Lambda_\bC\to H^{0,1}$, whose kernel is equal to $F$. Moreover, $p(\Lambda_0)\cong \Lambda_0$ is a full lattice in $H^{0,1}$ over $\bR$. Let $T=H^{0, 1}/p(\Lambda_0)$. Then $T$ is a compact complex torus, and it is the analog of $\pic^0(\bar{X})$, where $\bar{X}$ is a good compactification of $X$. 

Let $\bar{p}: \Lambda_\bC/\Lambda_0\to H^{0,1}/p(\Lambda_0)=T$ be the natural map induced by $p$. Then $\bar{p}$ is an affine bundle map, with fiber isomorphic to $F$. More precisely, $\bar{p}: \Lambda_\bC/\Lambda_0\to T$ is a principal $F$ bundle map. Fiberwise, we can take a compactification $F\subset \bP^m$, where $m=\dim F$. We can globalize this compactification to get a compactification $\overline{\Lambda_\bC/\Lambda_0}$, which is $\bP^m$-bundle over T. More precisely, given a complex affine space $A$, we can define a vector space $V_A$ as follows, which we call the linearization of $A$. The vector space $V_A$ is generated by elements of $A$ subject to the relation that $\lambda a+(1-\lambda) b= c$ (as relation in $V_A$) for elements $a, b, c\in A$ with $\lambda a+(1-\lambda) b= c$ (as relation in $A$). Then the projective bundle $\overline{\Lambda_\bC/\Lambda_0}$ over $T$ is obtained first by a fiberwise linearization of the affine bundle $\bar{p}: \Lambda_\bC/\Lambda_0\to H^{0,1}/p(\Lambda_0)=T$, then taking fiberwise projectivization. 

\begin{defn}\label{defbdr}
Given a 1-Hodge structure $(\Lambda, W, F)$, recall that $\Lambda_0$ is defined by $\Lambda_0=\Lambda\cap W$. We call the abelian complex Lie group $\Lambda_\bC/\Lambda$ (resp. $\Lambda_\bC/\Lambda_0$)  the \textbf{Betti} (resp. \textbf{de Rham}) \textbf{moduli space associated to the 1-Hodge structure} $(\Lambda, W, F)$. 

We have defined a canonical compactification $\overline{\Lambda_\bC/\Lambda_0}$ of $\Lambda_\bC/\Lambda_0$, which is a $\bP^m$-bundle over the compact torus $T$. An analytic set of $\Lambda_\bC/\Lambda_0$ is called \textbf{de Rham closed}, if its Euclidean topological closure in $\overline{\Lambda_\bC/\Lambda_0}$ is also analytically closed. Clearly, the de Rham closedness defines a topology on $H/\Lambda_0$. We call this topology the \textbf{de Rham topology}. 

We will call an analytic set of $\Lambda_\bC/\Lambda$ \textbf{Betti closed}, if it is closed in the usual algebraic Zariski topology of $\Lambda_\bC/\Lambda$ via the identification $\Lambda_\bC/\Lambda\cong (\cc^*)^n$, where $n=\rank(\Lambda)$. That is, the \textbf{Betti topology} on $\Lambda_\bC/\Lambda$ corresponds to the usual algebraic Zariski topology on $(\cc^*)^n$. 

An irreducible algebraic set $S$ of $\Lambda_\bC/\Lambda$ is called an \textbf{irreducible Betti-de Rham set}, if there exists a de Rham closed set $R\subset \Lambda_\bC/\Lambda_0$ of the same dimension as $S$ such that $\rh(R)\subset S$. An algebraic set of $\Lambda_\bC/\Lambda$ is called a \textbf{Betti-de Rham set}, if it is a finite union of irreducible Betti-de Rham sets. 
\end{defn}

\begin{rmk}\label{bimeromorphic}
Let $\overline{\Lambda_\bC/\Lambda_0}'$ be another compactification of $\Lambda_\bC/\Lambda_0$ such that there is a bimeromorphic map between $\overline{\Lambda_\bC/\Lambda_0}'$ and $\overline{\Lambda_\bC/\Lambda_0}$ extending the identity map of $\Lambda_\bC/\Lambda_0$. Then an analytic set of $\Lambda_\bC/\Lambda_0$ is de Rham closed if and only if its Euclidean closure in $\overline{\Lambda_\bC/\Lambda_0}'$ is analytically closed. 
\end{rmk}

Let $(\Lambda', W', F')$ be a sub 1-Hodge structure of $(\Lambda, W, F)$. Then the image of the natural embedding $\Lambda'_\bC/\Lambda'\to \Lambda_\bC/\Lambda$ is an irreducible Betti-de Rham set. We will show the converse is also true up to a translate. 

\begin{theorem}\label{structurebdr}
Let $S$ be an irreducible Betti-de Rham set in $\Lambda_\bC/\Lambda$ as defined above. Then $S$ is a translate of a subtorus $\hat{S}$ of $\Lambda_\bC/\Lambda$. Moreover, $\hat{S}$ is defined by a sub 1-Hodge structure of $(\Lambda, W, F)$. 
\end{theorem}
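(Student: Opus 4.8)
The plan is to pull everything back to the universal cover $\Lambda_\bC$, dispose of the ``affine'' directions with Proposition \ref{main0} and Corollary \ref{germtori}, treat the ``compact'' direction by a rescaling/limit argument in the spirit of \cite{si}, and then reassemble while tracking the weight and Hodge filtrations so as to recognize the resulting subtorus as one coming from a sub $1$-Hodge structure.

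First the reductions. Replacing $R$ by an irreducible component of maximal dimension whose $\rh$-image is Zariski dense in $S$, we may assume $R$ is irreducible, $\dim R=\dim S$, and $\rh(R)$ is dense in $S$; translating $S$ and $R$ we may assume $1\in S$. Let $q\colon\Lambda_\bC\to\Lambda_\bC/\Lambda_0$, so that $\exp:=\rh\circ q\colon\Lambda_\bC\to\Lambda_\bC/\Lambda\cong(\cc^*)^n$ is the exponential map. Then $q^{-1}(R)$ is a closed $\Lambda_0$-invariant analytic subset of $\Lambda_\bC$, and each irreducible component $V$ satisfies $\exp(V)\subset S$ and $\dim V=\dim S$, hence $\exp(V)=S$ by Lemma \ref{cover}. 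It therefore suffices to show that $V$ is a translate of a complex subspace $V_0\subset\Lambda_\bC$ for which the induced datum $(\Lambda\cap V_0,\,W\cap(\Lambda\cap V_0)_\bQ,\,F\cap V_0)$ is a sub $1$-Hodge structure: indeed $\exp(V_0)=\exp(-v)S$ is then an algebraic subgroup, so a subtorus $\hat S$ with $\mathrm{Lie}(\hat S)=V_0$ (because a subspace plus a countable set cannot cover a larger subspace), and $\hat S=(\Lambda\cap V_0)_\bC/(\Lambda\cap V_0)$ is defined by that sub $1$-Hodge structure. Note that $\Lambda\cap V_0$ is automatically saturated, so the only real content is the two conditions $(W\cap V_0)_\bC\cap F+\overline{(W\cap V_0)_\bC\cap F}=(W\cap V_0)_\bC$ and $(W\cap V_0)_\bC+(F\cap V_0)=V_0$.

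The argument then proceeds by induction on $n=\rank\Lambda$, using quotients by sub $1$-Hodge structures as the analog of the projections $(\cc^*)^n\to(\cc^*)^{n-1}$ in the proof of Proposition \ref{main0}. Any sub $1$-Hodge structure of $(\Lambda,W,F)$ induces compatible surjections of Betti and de Rham moduli spaces intertwining $\rh$; using properness of $\overline{\Lambda_\bC/\Lambda_0}\to T$ and Remark \ref{bimeromorphic} to control closures under the induced rational maps of compactifications, the image of a de Rham closed set is de Rham closed, so the images of $R$ and $S$ exhibit the image of $S$ as a Betti-de Rham set for the quotient $1$-Hodge structure. If two independent quotients strictly drop the dimension of $S$, the induction hypothesis produces two translated subtori coming from sub $1$-Hodge structures and containing $S$; intersecting them, and using that intersections of sub $1$-Hodge structures are again sub $1$-Hodge structures (Deligne), completes the step. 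This leaves two base cases. In the ``multiplicative'' case $W=0$ one has $\Lambda_0=0$, $\Lambda_\bC/\Lambda_0=\Lambda_\bC$, $\rh=\exp$, and $\overline{\Lambda_\bC/\Lambda_0}=\bP^n$, so a de Rham closed set is exactly an algebraic subvariety of $\cc^n$ by Chow's theorem; Proposition \ref{main0} applies verbatim, and every subtorus of $(\cc^*)^n$ is evidently defined by a sub $1$-Hodge structure of type $(1,1)$.

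The remaining ``pure'' case $W=\Lambda_\bQ$ is the main obstacle. Here $\Lambda_\bC/\Lambda_0=\Lambda_\bC/\Lambda\cong(\cc^*)^n$ carries two genuinely different topologies: the Betti (algebraic Zariski) one, and the de Rham one coming from the $\bP^m$-bundle $\overline{\Lambda_\bC/\Lambda_0}$ over the compact complex torus $T=H^{0,1}/p(\Lambda_0)$, which need not be algebraic, so Simpson's use of algebraicity of $\pic^0$ in the projective case is unavailable. The plan is to imitate \cite{si} by using the $\cc^*$-action on $\Lambda_\bC/\Lambda_0$ rescaling the fiber (``connection'') coordinate of $\bar p\colon\Lambda_\bC/\Lambda_0\to T$: the Euclidean limit of $R$ as the parameter tends to $0$ is again de Rham closed, lies over the same image $\overline{\bar p(R)}\subset T$, and is ``vertically trivial'', reducing the analysis of $\overline{\bar p(R)}$ to the interaction of an analytic subvariety of $T$ with the transcendental map $T\to\Lambda_\bC/\Lambda$; combining this with the algebraicity of $S$ and Lemma \ref{cover}, exactly as in the loop-around-a-puncture argument in the Claim inside the proof of Proposition \ref{main0} but now run along $T$, forces $\overline{\bar p(R)}$ to be a translate of a subtorus of $T$ arising from a weight-$1$ sub $1$-Hodge structure and forces the corresponding part of $V$ to be a coset of a subspace of $W_\bC$ compatible with the Hodge decomposition. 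Finally, assembling the contribution of the $H^{1,1}$-part coming from the multiplicative base case with the $W_\bC$-part coming from the pure case, the subspace $V_0$ splits as a sub-weight-$1$-Hodge-structure piece inside $W_\bC$ plus a piece inside $F$ projecting isomorphically onto a coordinate subspace of $H^{1,1}=\Lambda_\bC/W_\bC$, which is exactly the statement that $(\Lambda\cap V_0,\,W\cap(\Lambda\cap V_0)_\bQ,\,F\cap V_0)$ is a sub $1$-Hodge structure and that $\hat S=(\Lambda\cap V_0)_\bC/(\Lambda\cap V_0)$. The hardest point is thus the pure case: substituting, for the algebraicity of $\pic^0$ used in \cite{si}, the combination of the Betti-algebraicity of $S$, Lemma \ref{cover}, and the rescaling limit.
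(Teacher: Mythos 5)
Your reductions and the base case $W=0$ are fine and agree with the paper (de Rham closed sets in $\Lambda_\bC$ are algebraic by Chow, then Proposition \ref{main0} applies), but the two places where the real content lies are not actually proved. First, the pure case $W=\Lambda_\bQ$. Your plan --- rescale the fiber coordinate of $\bar{p}\colon\Lambda_\bC/\Lambda_0\to T$, take a Euclidean limit of $R$, and then run ``the loop-around-a-puncture argument along $T$'' --- is not an argument. The loop trick in the Claim inside Proposition \ref{main0} works because $V$ is an algebraic curve whose closure in $\pp^2$ must meet the boundary lines, producing a nonzero class in $H_1((\cc^*)^2,\zz)$; there is no analogous mechanism on a general compact complex torus $T$, which need not be algebraic and whose analytic subvarieties need not be translated subtori, and the ``transcendental map $T\to\Lambda_\bC/\Lambda$'' you invoke does not exist (the natural map goes the other way). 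You also do not justify that the rescaling limit of $R$ is de Rham closed, nor how ``vertical triviality'' yields a sub $1$-Hodge structure. The paper does not reprove this case from scratch either: it quotes Simpson's proof of \cite[Theorem 3.1(c)]{si} and only replaces the one step that used algebraicity of $T$ (the reference to \cite{si1}) by a short argument that codimension-one irreducible Betti--de Rham sets cannot exist (an algebraic equation of $S$ restricted to a general affine line in a fiber of $\bar{p}$ has an essential singularity at infinity, hence infinitely many zeros, contradicting de Rham closedness), together with a check, via Remark \ref{bimeromorphic} and properness, that the de Rham topology behaves like the Zariski topology under sum maps. If you mean to bypass Simpson entirely, you must supply that whole argument; you have not.

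Second, your inductive step for the mixed case --- ``take two independent dimension-dropping quotients by sub $1$-Hodge structures and intersect'' --- has a genuine gap: unlike generic projections $(\cc^*)^n\to(\cc^*)^{n-1}$, sub and quotient $1$-Hodge structures can be scarce. For a generic mixed structure of rank $3$ with $\rank\Lambda_0=2$ and $\dim H^{1,1}=1$, the only sub-objects are $0$, $\Lambda_0$ and $\Lambda$, so there is at most one nontrivial quotient and nothing to intersect; you also say nothing about the situation where no quotient drops the dimension. The paper's Case 3 is designed exactly to get around this: translate $S$ to the origin and replace $\Lambda$ by the sub $1$-Hodge structure generated by $S$; note that the fibers of $\ev$ are Betti moduli spaces of the pure weight-one structure $\Lambda_0$, apply the pure case fiberwise and use countability of sub $1$-Hodge structures to find one subtorus $T$ common to all fibers; quotient by $T$ to reduce to $\ev|_S$ generically finite; then induct on $\dim H^{1,1}$ using that \emph{every} surjection $\Lambda/\Lambda_0\to\zz$ is a morphism of pure $(1,1)$ Hodge structures, which supplies the abundance of ``hyperplane'' slices your scheme lacks, and handle $\dim H^{1,1}=1$ via the sums $\overline{S+S},\overline{S+S+S},\dots$ combined with the non-existence of codimension-one Betti--de Rham sets in the pure case. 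None of these mechanisms appears in your proposal, and your final assembly of $V_0$ into a sub $1$-Hodge structure is asserted rather than proved.
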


\begin{proof}

The following commutative diagram of abelian complex Lie groups is the quasi-compact K\"ahler analog of the diagram in \cite{bw1}. 
\begin{equation}\label{ses}
\xymatrix{
0\ar[r] &(W_\bC/\Lambda_0)_\dr\ar[r]\ar[d]^{\rh} &\Lambda_{\mathbf{C}}/\Lambda_0\ar[d]^{\rh}\ar[r]^{\res} &H^{1,1}\ar[d]^{\rh}\ar[r] &0\\
0\ar[r] &(W_\bC/\Lambda_0)_\betti\ar[r] &\Lambda_\bC/\Lambda\ar[r]^{\ev\hspace{4mm}} &H^{1,1}/(\Lambda/\Lambda_0)\ar[r] &0
}
\end{equation}
Notice that both rows are exact, and the first vertical arrow is an isomorphism of complex Lie groups. Each complex Lie group in the diagram has two topologies. One is the Euclidean topology, which is induced from the complex manifold structures. All the arrows are continuous with respect to the Euclidean topology. There is another topology on each complex Lie group in the diagram, which is the analog of algebraic Zariski topology. In the second row, the second topology is the Betti topology. Each object in the second row is isomorphic to an affine torus. The Betti topology is the usual algebraic Zariski topology on affine tori. In the first row, the second topology is the de Rham topology. We have defined the de Rham topology on $\Lambda_\bC/\Lambda_0$. It follows from the definition that any 1-Hodge structure $(\Lambda, W, F)$ induces a 1-Hodge structure on $(\Lambda_0, W, F\cap W_\bC)$. With this induced 1-Hodge structure we can define the de Rham topology on $W_\bC/\Lambda_0$. The de Rham topology on the complex vector space $H^{1,1}$ is the algebraic Zariski topology on an affine space. To emphasize the second topology on $W_\bC/\Lambda_0$, we use the subscripts $DR$ and $B$ for the de Rham and the Betti topology, respectively. 


Now we prove the theorem in 3 steps. 

Case 1, $W=0$, that is when the first objects in the short exact sequences of (\ref{ses}) are trivial. In this case, $\Lambda_0=0$. In this case, $\Lambda_\bC/\Lambda_0\cong \Lambda_\bC$, and $\rh: \Lambda_\bC\to \Lambda_\bC/\Lambda$ is isomorphic to the exponential map. The definition of Betti-de Rham set is equivalent to the affine Betti-de Rham set. It follows from Proposition \ref{main0} that $S$ is a translate of a subtorus. When $W=0$, every subtorus of $\Lambda_\bC/\Lambda$ is defined by a sub 1-Hodge structure. Thus, the proposition follows. 

Case 2, $W=\Lambda_\bQ$, that is when the last objects in the short exact sequences of (\ref{ses}) are trivial. In this case, $W_\bC=\Lambda_\bC$, $\Lambda_0=\Lambda$ and $H^{1,1}=0$. The proposition is essentially proved in \cite[Theorem 3.1 (c)]{si}. In fact, Simpson proved this case with the assumption that $T=H^{0, 1}/p(\Lambda_0)$ is an abelian variety. However, the assumption is not necessary in the proof. The proof there is quite easy to follow, except the reference to \cite{si1}. So we will give a sketch of this part. 

We will prove there does not exist a codimension one irreducible Betti-de Rham set. In fact, let $F$ be any fiber of the principal affine bundle $\bar{p}: \Lambda_\bC/\Lambda\to T$, and let $G$ be a general affine line in $F$. Suppose there exists a codimension one irreducible Betti-de Rham set $S$. Considering $S$ as an algebraic subvariety of $\Lambda_\bC/\Lambda\cong (\cc^*)^n$, we can assume that $S$ is defined by an algebraic function $h$. It is a straightforward computation to check that the restriction of $h$ to $G$ has exponential growth. In other words, $h$ has essential singularity at infinity. Therefore, $h=0$ has infinitely many roots on $G$. This is a contradiction to $W$ being de Rham closed. 

For the rest of Simpson's proof in \cite{si} to work, we just need to check that the de Rham topology behaves in the same way as algebraic Zariski topology under projections. For example, let $s:\Lambda_\bC/\Lambda\times \Lambda_\bC/\Lambda\to \Lambda_\bC/\Lambda$ be the addition map, and let $Z_1$ and $Z_2$ be de Rham closed subsets of $\Lambda_\bC/\Lambda$. Then the closure of $s(Z_1\times Z_2)$, with respect to Euclidean topology, in $\Lambda_\bC/\Lambda$ is de Rham closed. In fact, this follows from Remark \ref{bimeromorphic} and the fact that the image of an analytic closed set under a proper map is still analytically closed. 

Case 3, the general case. Suppose $S$ is an irreducible Betti-de Rham set in the Betti moduli space $\Lambda_\bC/\Lambda$. Since being Betti-de Rham set is invariant under translations, we can assume that $S$ contains the the origin $0\in \Lambda_\bC/\Lambda$. Denote by $\Sigma S$ the Euclidean closure of the subgroup of $\Lambda_\bC/\Lambda$ generated by $S$. Then $\Sigma S$ is an irreducible Betti-de Rham set, which is a subgroup of $\Lambda_\bC/\Lambda$. It is easy to see that $\Sigma S$ is a subtorus defined by a sub 1-Hodge structure. We can also assume this by induction on the codimension of $S$. Replacing the original 1-Hodge structure $\Lambda$ by this sub 1-Hodge structure, we can assume that $S$ generates $\Lambda_\bC/\Lambda$ without loss of generality. For the rest of the proof, we will assume that $S$ contains the origin, and $S$ generates $\Lambda_\bC/\Lambda$, i.e., $\Sigma S=\Lambda_\bC/\Lambda$. 


As we have discussed in the beginning of this proof, the abelian group $\Lambda/\Lambda_0$ has a 1-Hodge structure of pure type (1,1). It follows easily from the definition of Betti-de Rham sets that $\overline{\ev(S)}$ is a Betti-de Rham set in the Betti moduli space $H^{1,1}/(\Lambda/\Lambda_0)$ associated to the 1-Hodge structure on $\Lambda/\Lambda_0$. Since $\overline{\ev(S)}$ contains the origin, by the conclusion of Case 1, $\overline{\ev(S)}$ is a translate of a subtorus in $H^{1,1}/(\Lambda/\Lambda_0)$. By the assumption that $S$ contains the origin and $S$ generates $\Lambda_\bC/\Lambda$, clearly $\overline{\ev(S)}=H^{1,1}/(\Lambda/\Lambda_0)$. 


Now, we use the short exact sequences (\ref{ses}) of Lie groups. We can consider the map $\Lambda_\bC/\Lambda_0\to H^{1,1}$ as a principal $W_\bC/\Lambda_0$ fiber bundle. Let $F$ be any fiber. 
By choosing an origin in $F$, there is an isomorphism $F\cong W_\bC/\Lambda_0$ of complex Lie groups. Since $H_0$ has a 1-Hodge structure, we can define Betti-de Rham sets of $F$ by Definition \ref{defbdr}. Moreover, the definition is independent of the choice of the origin, since the notion of Betti-de Rham sets is translation invariant. 
Now, clearly $S\cap F$ is a Betti-de Rham set of $F$. By the conclusion of Case 2, $S\cap F$ is a finite union of translates of Lie subgroups defined by sub 1-Hodge structures of $\Lambda_0$. There are at most countably many sub 1-Hodge structures of $\Lambda_0$. Therefore, for every fiber $F$ with $S\cap F\neq \emptyset$, $S\cap F$ is irreducible and it is a translate of the same subtorus $T$ of $W_\bC/\Lambda_0$. In other words, $S$ is invariant under the translation by elements in $T$. Moreover, subtorus $T$ is defined by a sub 1-Hodge structure $\Lambda_T$ of $\Lambda_0$. 

Since 1-Hodge structures form an abelian category, $\Lambda/\Lambda_T$ also has a natural 1-Hodge structure. Denote this 1-Hodge structure by $\Lambda'$ and denote $S/T$ by $S'$. By the above argument, $S'$ is an irreducible Betti-de Rham set in $\Lambda'_\bC/\Lambda'$. Moreover, the restriction of $\ev: \Lambda'_\bC/\Lambda'\to H'^{1,1}/(\Lambda'/\Lambda'_0)$ to $S'$ is dominant and generically finite. It suffices to show that $S'$ is a translate of a subtorus of $\Lambda'_\bC/\Lambda'$ defined by a sub 1-Hodge structure. Therefore, we are reduced to the case when the map $\ev|_S: S\to H^{1,1}/(\Lambda/\Lambda_0)$ is generically finite. 

We proceed using induction on $\dim_\bC H^{1,1}$. When $\dim_\bC H'^{1,1}=0$, the subspace $W'$ is a point, and we are done. When $\dim_\bC H'^{1,1}=1$, $S$ is a curve. The subsets $S$, $\overline{S+S}$, $\overline{S+S+S}$, $\ldots$, are all irreducible Betti-de Rham sets of $\Lambda_\bC/\Lambda$. By our assumption, the whole Betti moduli space $\Lambda_\bC/\Lambda$ will appear in the above sequence. Therefore, there exists a codimension one Betti-de Rham set in $\Lambda_\bC/\Lambda$ which maps dominantly under $\ev$ to $H^{1,1}/(\Lambda/\Lambda_0)$. Restricting such codimension one Betti-de Rham set to a general fiber $F$ of $\ev: \Lambda_C/\Lambda\to H^{1,1}/(\Lambda/\Lambda_0)$. By the earlier discussion, $F$ is isomorphic to the Betti moduli space associated to the pure 1-Hodge structure $\Lambda_0$ of weight one. By the argument in the proof of Case 2, codimension one Betti-de Rham subsets of $F$ do not exist, hence a contradiction. 

Suppose $\dim_\bC H^{1,1}>1$. Since the 1-Hodge structure on $\Lambda/\Lambda_0$ is pure of type (1,1), any surjective homomorphism $\Lambda/\Lambda_0\to \zz$ is a map of 1-Hodge structures, where we give the trivial type (1,1)-Hodge structure on $\zz$. Denote the kernel of $\Lambda/\Lambda_0\to \zz$ by $\Lambda'$. Clearly, $\Lambda'_0=\Lambda_0$. The intersection $S\cap \Lambda'/\Lambda_0$ in $\Lambda_\bC/\Lambda$ is a Betti-de Rham set in $\Lambda'/\Lambda_0$. By the induction hypothesis, $S\cap \Lambda'/\Lambda_0$ is a finite union of translates of subtori defined by sub 1-Hodge structures of $\Lambda'$. Notice that this conclusion holds for any $\Lambda'=\ker(\Lambda\to \zz)$, as long as the map $\Lambda\to \zz$ factors through the quotient map $\Lambda\to \Lambda/\Lambda_0$. One can easily deduce that near the origin $S$ is equal to the image of a linear subspace under the exponential map. Therefore, $S$ is a subtorus. 
\end{proof}

The following is an immediate consequence of the preceding theorem. 

\begin{cor}\label{bdrsets}
Let $S$ be a Betti-de Rham set in $\Lambda_\bC/\Lambda$. Then $S$ is a finite union of translates of subtori that are defined by a sub 1-Hodge structures of $(\Lambda, W, F)$. 
\end{cor}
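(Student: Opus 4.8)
The plan is to deduce this directly from Theorem \ref{structurebdr} by unwinding Definition \ref{defbdr}. By definition, a Betti-de Rham set $S\subset\Lambda_\bC/\Lambda$ is a finite union $S=\bigcup_{j=1}^m S_j$ of irreducible Betti-de Rham sets $S_j$. So the first step is simply to record this decomposition, which comes for free from the hypothesis.

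Next I would apply Theorem \ref{structurebdr} to each piece $S_j$ separately. That theorem asserts precisely that $S_j$ is a translate $v_j+\hat S_j$ of a subtorus $\hat S_j$ of $\Lambda_\bC/\Lambda$, and moreover that $\hat S_j$ is defined by a sub 1-Hodge structure of $(\Lambda,W,F)$. Assembling these, $S=\bigcup_{j=1}^m (v_j+\hat S_j)$ exhibits $S$ as a finite union of translates of subtori of the required type, which is exactly the assertion of the corollary.

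There is essentially no obstacle here: all of the content lies in Theorem \ref{structurebdr}, and the remaining bookkeeping is routine. Finiteness of the union is built into the definition of a Betti-de Rham set, and the fact that each subtorus arises from a sub 1-Hodge structure is already part of the theorem's conclusion; in particular one need not even pass to the irreducible components of $S$ as an algebraic set, since the defining decomposition into irreducible Betti-de Rham sets is all that is invoked. The one thing worth noting for context is that this corollary, combined with the identification (carried out in Section \ref{Kahler}) of the cohomology jump loci $\Sigma^i_k(X)$ with Betti-de Rham sets attached to the 1-Hodge structure on $H^1(X,\zz)$, is what will yield Theorem \ref{main1}.
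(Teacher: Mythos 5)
Your proposal is correct and matches the paper exactly: the paper states the corollary as an immediate consequence of Theorem \ref{structurebdr}, obtained by decomposing $S$ into its irreducible Betti-de Rham pieces (as in Definition \ref{defbdr}) and applying the theorem to each. Nothing further is needed.
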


\section{Cohomology jump loci of quasi-compact K\"ahler manifolds}\label{Kahler}
In this section we give the proof of Theorem \ref{main1}. Let $X$ be a quasi-compact K\"ahler manifold. We fix a good compactification $\bar{X}$, such that the boundary divisor $D=\bar{X}\setminus X$ is a normal crossing divisor in $\bar{X}$. Let $\mdr(\bar{X}/D)$ be the moduli space of logarithmic flat line bundles on $\bar{X}$ with poles along $D$. 
Let $\mb^0(X)$ and $\mdr^0(\bar{X}/D)$ respectively be the connected components of $\mb(X)$ and $\mdr(\bar{X}/D)$ containing the origin. The group $H^1(X, \zz)$ has a natural 1-Hodge structure. The Betti and de Rham moduli spaces associated to $H^1(X, \zz)$ are naturally isomorphic to $\mb^0(X)$ and $\mdr^0(\bar{X}/D)$. 

Theorem \ref{main1} will be reduced to the following statement.
\begin{prop}\label{prop_bdr}
Under the above notations, $\Sigma^i_k(X)\cap \mb^0(X)$ is a Betti-de Rham set in $\mb^0(X)$. 
\end{prop}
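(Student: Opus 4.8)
The plan is to introduce a de Rham counterpart of the cohomology jump loci inside $\mdr^0(\bar{X}/D)$ and to run Simpson's argument from \cite[Theorem 1.3(c)]{si} in the quasi-compact K\"ahler setting, using the compactification $\overline{\mdr^0(\bar{X}/D)}$ of Section 3 (Definition \ref{defbdr}). I use the Hodge-theoretic identifications just discussed: with $\Lambda=H^1(X,\mathbf{Z})$ carrying its $1$-Hodge structure, $\mb^0(X)\cong\Lambda_\bC/\Lambda$ and $\mdr^0(\bar{X}/D)\cong\Lambda_\bC/\Lambda_0$, and under these $\rh$ is the Hodge-theoretic Riemann--Hilbert map of Section 3, a covering map of abelian complex Lie groups (in particular a local biholomorphism). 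Since $\Sigma^i_k(X)$ is cut out of $\mb(X)$ by determinantal ideals it is algebraic, so $\Sigma^i_k(X)\cap\mb^0(X)$ is an algebraic subset of the affine torus $\mb^0(X)$; it suffices to show each of its irreducible components $W$ is an irreducible Betti--de Rham set, i.e. admits a de Rham closed $R\subset\mdr^0(\bar{X}/D)$ with $\dim R=\dim W$ and $\rh(R)\subset W$. To that end, let $(\sE,\nabla)$ be the universal logarithmic flat line bundle on $\bar{X}\times\mdr^0(\bar{X}/D)$, form the relative logarithmic de Rham complex $\Omega^\bullet_{\bar{X}}(\log D)\otimes\sE$, and — since $\bar{X}$ is compact — represent its derived direct image to $\mdr^0(\bar{X}/D)$ locally by a bounded complex of vector bundles. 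Let $\Sigma^i_{k,\dr}\subset\mdr^0(\bar{X}/D)$ be the locus where its $i$-th hypercohomology has dimension $\ge k$; by semicontinuity this is a closed analytic subset, locally cut out by determinantal ideals.

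First I would check that $\Sigma^i_{k,\dr}$ is de Rham closed. Along a fiber of $\bar{p}\colon\mdr^0(\bar{X}/D)\to T\cong\pic^0(\bar{X})$ the logarithmic connections on a fixed line bundle form an affine space, and the de Rham differentials $\nabla=d+\omega\wedge$ depend affine-linearly on the connection form $\omega$; hence along such a fiber $\Sigma^i_{k,\dr}$ is cut out by equations of bounded degree in the residue coordinates. Homogenizing these equations fiberwise extends them across the boundary $\mathbf{P}^{m-1}$-bundle, so the Euclidean closure of $\Sigma^i_{k,\dr}$ in $\overline{\mdr^0(\bar{X}/D)}$ is analytic; by Remark \ref{bimeromorphic} this is independent of the compactification.

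Next comes the comparison $\rh(\Sigma^i_{k,\dr})=\Sigma^i_k(X)\cap\mb^0(X)$. The inclusion ``$\supseteq$'' is Deligne's comparison theorem \cite{deligne} (in the form due to Timmerscheidt, valid in the compact K\"ahler category) for Deligne's canonical extension $E^{\mathrm{can}}_\rho$, for which $H^i(\bar{X},\Omega^\bullet_{\bar{X}}(\log D)\otimes E^{\mathrm{can}}_\rho)\cong H^i(X,L_\rho)$, so $\rho\in\Sigma^i_k(X)$ forces $E^{\mathrm{can}}_\rho\in\Sigma^i_{k,\dr}$. For the other direction I would stratify $\mb^0(X)$ by the subtori $T_j\subset\mb^0(X)$ of local systems with trivial monodromy around the boundary component $D_j$. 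Over $\mb^0(X)\setminus\bigcup_jT_j$ the monodromy around every boundary component is nontrivial, so the mapping cone of any elementary twist of a logarithmic extension by a boundary divisor is the logarithmic de Rham complex of a restriction whose residue is a nonzero scalar, hence acyclic; thus over $\mb^0(X)\setminus\bigcup_jT_j$ \emph{every} logarithmic extension of $L_\rho$ computes $H^\ast(X,L_\rho)$, and $\rh(\Sigma^i_{k,\dr})$ agrees with $\Sigma^i_k(X)$ there. This settles every component $W$ not contained in some $T_j$. For a component $W\subset T_j$ the monodromy around $D_j$ is trivial on $W$, so $L_\rho$ extends across $D_j^{\circ}$ for $\rho\in W$; here I would peel off $D_j$, passing to the quasi-compact K\"ahler manifold $\bar{X}\setminus(D\setminus D_j)$, and feed the Gysin sequence of $X\subset\bar{X}\setminus(D\setminus D_j)$ — whose remaining terms are cohomologies of $L_\rho$ on fewer boundary components and on $D_j^{\circ}$ — into an induction on the number of boundary components (the compact case being Arapura's), using Corollary \ref{bdrsets} for the constituents, to conclude that $W$ is a Betti--de Rham set.

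To finish: since $\Sigma^i_{k,\dr}$ is de Rham closed, its closure in $\overline{\mdr^0(\bar{X}/D)}$ has finitely many irreducible components, each of which meets $\mdr^0(\bar{X}/D)$ in an irreducible de Rham closed set; applying the dimension-preserving covering map $\rh$ and using the comparison, each component $W\not\subset\bigcup_jT_j$ of $\Sigma^i_k(X)\cap\mb^0(X)$ equals the closure of the $\rh$-image of one such piece $R$, whence $\rh(R)\subset W$ and $\dim R=\dim W$, so $W$ is an irreducible Betti--de Rham set; the components inside some $T_j$ are handled by the previous paragraph. Hence $\Sigma^i_k(X)\cap\mb^0(X)$ is a Betti--de Rham set. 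The step I expect to be the main obstacle is the comparison over the loci $\bigcup_jT_j$ where a boundary monodromy degenerates — i.e. the inductive Gysin argument, which is where the quasi-compact case really departs from Simpson's projective argument — together with making rigorous the extension of the jump equations across the boundary of $\overline{\mdr^0(\bar{X}/D)}$; once these are in place the conclusion follows formally from the structure of de Rham closed sets.
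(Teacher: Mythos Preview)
Your overall strategy---define a de Rham jump locus, prove it is de Rham closed, then compare with the Betti side through the Riemann--Hilbert covering---is the same as the paper's. But the two steps are executed quite differently, and the paper's choices avoid exactly the obstacle you flag.

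For de Rham closedness, the paper does not argue fiberwise about degrees and homogenization. Instead it passes to the moduli of logarithmic $\lambda$-connections $\mhod(\bar X/D)$, which is a \emph{vector} bundle over $\pic(\bar X)$ whose fiberwise projectivization is precisely the compactification $\overline{\mdr(\bar X/D)}$. The extended jump loci $\widetilde{\Sigma}^i_k(\bar X/D)\subset \mhod(\bar X/D)$ are closed analytic and $\bC^*$-invariant (rescaling $\lambda$), hence descend to closed analytic subsets of the projective bundle, and their intersection with $\mdr(\bar X/D)$ is $\Sigma^i_k(\bar X/D)$. This makes de Rham closedness automatic, with no ad hoc extension of equations.

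For the comparison, the paper uses Deligne's theorem in the form
\[
RH^{-1}\bigl(\Sigma^i_k(X)\bigr)\setminus BL \;=\; \Sigma^i_k(\bar X/D)\setminus BL,
\]
where $BL$ is the locus where some residue is a \emph{positive integer}. The key move---and the one you are missing by restricting to $\mdr^0(\bar X/D)$---is that the paper works with \emph{all} connected components of $\mdr(\bar X/D)$. Given an irreducible component $S\subset \Sigma^i_k(X)\cap \mb^0(X)$, pick $\rho\in S$ and take its Deligne canonical extension $(E_\rho,\nabla_\rho)$; this lies in some component $\mdr^1(\bar X/D)$ and has residues in $[0,1)$, hence is not in $BL$. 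After choosing a point of $RH^{-1}(\mathbf{1})\cap \mdr^1$ as origin, $\mdr^1\cong \Lambda_\bC/\Lambda_0$ and the same $\mhod$ argument shows $\Sigma^i_k(\bar X/D)\cap \mdr^1$ is de Rham closed there. Now $RH^{-1}(S)\cap \mdr^1$ is not contained in $BL$, so an irreducible component $R$ of $\Sigma^i_k(\bar X/D)\cap \mdr^1$ has $\dim R=\dim S$ and $RH(R)=S$. This is exactly what is needed, and it completely sidesteps your Gysin induction on boundary components.

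In short: your stratification by the subtori $T_j$ and the inductive peeling-off of divisors is unnecessary. The freedom to move to another connected component of $\mdr(\bar X/D)$ (where the Deligne extension lives) places you outside $BL$ for free, and then Deligne's comparison applies directly. Your self-identified ``main obstacle'' disappears once you stop insisting on $\mdr^0$.
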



\begin{rmk}
Even though in general the moduli spaces of logarithmic flat bundles are only defined for quasi-projective manifolds, in the rank one case the definition easily extends to quasi-compact K\"ahler manifolds. The main reason is that rank one bundles are always stable. So there is no polarization and stability condition involved. In fact, one can show that $\mdr(\bar{X}/D)$ is a fine moduli space by constructing a universal family of flat bundles with logarithmic connections. This is very similar to the proof of the fact that the Picard group of a compact K\"ahler manifold is a fine moduli space. For the same reason, the moduli space $\mhod(\bar{X}/D)$ (as defined in \cite{si3}) of line bundles on $\bar{X}$ with $\lambda$-logarithmic connections with poles along $D$ is also well-defined for the pair $(\bar{X}, D)$. 
\end{rmk}

\begin{proof}[Proof of Proposition \ref{prop_bdr}]
By definition, the closed points of $\mdr(\bar{X}/D)$ parametrize the logarithmic flat line bundles $(E, \nabla)$, where $E$ is a line bundle on $\bar{X}$ and $\nabla: E\to E\otimes_{\sO_{\bar{X}}}\Omega_{\bar{X}}(\log D)$ is a logarithmic connection with poles along $D$. Notice that we do not put any assumption on the Chern classes of the underlying line bundle. So in general, $\mdr(\bar{X}/D)$ may have infinitely many connected components. In any case, there is a canonical covering map $RH: \mdr(\bar{X}/D)\to \mb(X)$ induced by taking restriction of the logarithmic flat bundle to $X$ and then taking the associated local system. We call this covering map the geometric Riemann-Hilbert map. Then the de Rham moduli space associated to the 1-Hodge structure $H^1(X, \zz)$ is isomorphic to $\mdr^0(\bar{X}/D)$. Moreover, the geometric Riemann-Hilbert map is isomorphic to the Hodge-theoretic Riemann-Hilbert map, that is the following diagram commutes,
$$
\xymatrix{
\Lambda_\bC/\Lambda_0\ar[r]^-{\cong}\ar[d]^{\rh}&\mdr^0(\bar{X}/D)\ar[d]^-{RH^0}\\
\Lambda_\bC/\Lambda\ar[r]^-{\cong}&\mb^0(X)
}
$$
where $\Lambda=H^1(X, \zz)$ with the natural 1-Hodge structure and $RH^0$ is the restriction of the geometric Riemann-Hilbert map $RH: \mdr(\bar{X}/D)\to \mb(X)$ to $\mdr^0(\bar{X}/D)$. 

Now, we need some conventions and results from \cite{bw1}. Define the de Rham cohomology jump loci to be 
$$\Sigma^i_k(\bar{X}/D)=\left\{(E, \nabla)\in \mdr(\bar{X}/D))|\dim \hh^i(\bar{X}, E\otimes \Omega^\ubul_{\bar{X}}(\log D))\geq k\right\}.$$
They are closed analytic subsets of $\mdr(\bar{X}/D)$. We will show that they are indeed de Rham closed subsets. We need the notation of $\lambda$-connections and their moduli space (see \cite{si3}). Denote by $\mhod(\bar{X}/D)$ the moduli space of line bundles on $\bar{W}$ with $\lambda$-logarithmic connections whose poles are along $D$. Then there is a projection $\pi: \mhod(\bar{X}/D)\to\aaa^1$, which is given by the parameter $\lambda$. Thus, $\pi^{-1}(1)\cong \mdr(\bar{X}/D)$. We can generalize the de Rham cohomology jump loci in $\mdr(\bar{X}/D)$ to the moduli space $\mhod(\bar{X}/D)$. Define
$$
\widetilde{\Sigma}^i_k(\bar{X}/D)=\left\{(E, \nabla)\in \mhod(\bar{X}/D))|\dim \hh^i(\bar{X}, E\otimes \Omega^\ubul_{\bar{X}}(\log D))\geq k\right\}.
$$
Notice that forgetting the connection, i.e. $(E, \nabla)\mapsto E$, defines two maps $\mdr(\bar{X}/D)\to \pic(\bar{X})$ and $\mhod(\bar{X}/D)\to \pic(\bar{X})$. Since the space of 1-connections on a line bundle is an affine space, and since the space of $\lambda$-connections on a line bundle is a vector space, $\mdr(\bar{X}/D)\to \pic(\bar{X})$ and $\mhod(\bar{X}/D)\to \pic(\bar{X})$ are affine bundle maps and vector bundle maps, respectively. Denote by $\pp(\mhod(\bar{X}/D)/\pic(\bar{X}))$ the projective bundle associated to the vector bundle $\mhod(\bar{X}/D)\to \pic(\bar{X})$. Then $\pp(\mhod(\bar{X}/D)/\pic(\bar{X}))$ is the projective compactification of the affine bundle $\mdr(\bar{X}/D)\to \pic(\bar{X})$. 

By definition, the cohomology jump locus $\widetilde{\Sigma}^i_k(\bar{X}/D)$ in $\mhod(\bar{X}/D)$ is preserved under the fiberwise $\bC^*$ multiplication. Denote the image of $\widetilde{\Sigma}^i_k(\bar{X}/D)$ in $\pp(\mhod(\bar{X}/D)/\pic(\bar{X}))$ by $\bar{\Sigma}^i_k(\bar{X}/D)$. Then $\bar{\Sigma}^i_k(\bar{X}/D)$ is a closed analytic subset of $\pp(\mhod(\bar{X}/D)/\pic(\bar{X}))$, and 
\begin{equation}\label{restriction}
\bar{\Sigma}^i_k(\bar{X}/D)\cap \mdr(\bar{X}/D)=\Sigma^i_k(\bar{X}/D)
\end{equation}
where we consider $\pp(\mhod(\bar{X}/D)/\pic(\bar{X}))$ as the fiberwise projective compactification of $\mdr(\bar{X}/D)$. 

Now, it follows from the definition of the 1-Hodge structure on $H^1(X)$ that the affine bundle map $\mdr^0(\bar{X}/D)\to \pic(\bar{X})$ is isomorphic to the affine bundle map $\bar{p}: \Lambda_\bC/\Lambda_0\to H^{0,1}/p(\Lambda_0)$ defined in the last paragraph before Definition \ref{defbdr}, where $\Lambda=H^1(X, \zz)$ with the natural 1-Hodge structure. It follows from the definition of de Rham subsets and the equality (\ref{restriction}) that $\Sigma^i_k(\bar{X}/D)\cap \mdr^0(\bar{X}/D)$ is a de Rham subset of $\Lambda_\bC/\Lambda_0$ by identifying $\mdr^0(\bar{X}/D)$ and $\Lambda_\bC/\Lambda_0$. Let $\mdr^1(\bar{X}/D)$ be another connected component of $\mdr(\bar{X}/D)$ whose image under the map $RH: \mdr(\bar{X}/D)\to \mb(X)$ is the component $\mb^0(X)$ containing the origin $\mathbf{1}$. Then by choosing a point of $RH^{-1}(\mathbf{1})\cap \mdr^1(\bar{X}/D)$ as an origin, we can construct an isomorphism between $\mdr^1(\bar{X}/D)$ and $\mdr^0(\bar{X}/D)$. Thus we obtain an isomorphism $\mdr^1(\bar{X}/D)\cong \Lambda_\bC/\Lambda_0$. Applying the same argument, we can show that $\Sigma^i_k(\bar{X}/D)\cap \mdr^1(\bar{X}/D)$ is a de Rham closed subset of $\Lambda_\bC/\Lambda_0$ via the above isomorphism. 

Define the bad locus $BL\subset \mdr(\bar{X}/D)$ to be the locus where one of the residues of $\nabla$ is a positive integer. By a theorem of Deligne \cite[II, 6.10]{d1} (see also \cite{bw1}), we have an equality
$$RH^{-1}(\Sigma^i_k(X))\setminus BL=\Sigma^i_k(\bar{X}/D)\setminus BL.$$
Given any irreducible component $S$ of $\Sigma^i_k(X)\cap \mb^0(X)$, it is proved in \cite{bw1} that there exists a connected component $\mdr^1(\bar{X}/D)$ of $\mdr(\bar{X}/D)$ such that $RH^{-1}(S)\cap \mdr^1(\bar{X}/D)$ is not contained in $BL$. In fact, take any point $\rho\in S$, and denote the Deligne extension of the local system $L_\rho$ by $(E_\rho, \nabla_\rho)$. Then the connected component of $\mdr(\bar{X}/D)$ containing $(E_\rho, \nabla_\rho)$ will work. Since the map $\mdr^1(\bar{X}/D)\to \mb^0(X)$ induced by $RH$ is a covering map, there exists an irreducible component $R$ of $\Sigma^i_k(\bar{X}/D)\cap \mdr^1(\bar{X}/D)\to \mb^0(X)$, such that $R$ has the same dimension as $S$ and $S=RH(R)$. Therefore, via the isomorphism $\mb^0(X)\cong \Lambda_\bC/\Lambda$, where $\Lambda=H^1(X, \zz)$, $S$ is a Betti-de Rham set. 

We have shown that every irreducible component of $\Sigma^i_k(X)\cap \mb^0(X)$ is a Betti-de Rham set. Thus, $\Sigma^i_k(X)\cap \mb^0(X)$ is a Betti-de Rham set. 
\end{proof}

\begin{proof}[Proof of Theorem \ref{main1}]
By Theorem \ref{structurebdr}, $\Sigma^i_k(X)\cap \mb^0(X)$ is a finite union of translates of a subtori. In particular, we have proved Theorem \ref{main1}, if $H_1(X, \zz)$ has no torsion. 

In general, since a finite cover of a quasi-compact K\"ahler manifold also has the structure of a quasi-compact K\"ahler manifold, it follows from a standard argument about cohomology jump loci of covering spaces (e.g. the last paragraph in the proof of \cite[Theorem 1.3]{w}) that $\Sigma^i_k(X)$ is a finite union of translated subtori. This finishes the proof of Theorem \ref{main1}.
\end{proof}


\begin{thebibliography}{BW3}

\bibitem[Ar]{a} D. Arapura, Geometry of cohomology support loci for local systems. I. J. Algebraic Geom. 6 (1997), no. 3, 563-597.

\bibitem[Ax]{Ax} J. Ax, On Schanuel's conjectures. 
Ann. of Math. (2) 93 1971 252-268.

\bibitem[B]{Be} A. Beauville, Annulation du $H^1$ et syst\'emes paracanoniques sur les surfaces. J. Reine Angew. Math. 388 (1988), 149-157.

\bibitem[BW1]{bw1} N. Budur, B. Wang, Cohomology jump loci of quasi-projective varieties. Ann. Sci. \'Ec. Norm. Sup\'er. (4) 48 (2015), no. 1, 227-236.

\bibitem[BW2]{bw2} N. Budur, B. Wang, Cohomology jump loci of differential graded Lie algebras. Compos. Math. 151 (2015), no. 8, 1499-1528.

\bibitem[BW3]{bw-sur} N. Budur, B. Wang, Recent results on cohomology jump loci.  Hodge Theory and $L^2$-analysis, ALM 39, Higher Education Press and International Press (2017), 205-241.

\bibitem[CL]{cll} A. Chambert-Loir, F. Loeser, A non-archimedean Ax-Lindemann theorem. Algebra \& Number Theory 11-9 (2017), 1967-1999.

\bibitem[D1]{d1} P. Deligne, {\it \'Equations diff\'erentielles \`a points singuliers r\'eguliers.} Lecture Notes in Mathematics, Vol. 163. Springer-Verlag, Berlin-New York, 1970.

\bibitem[D2]{deligne} P. Deligne, Th\'eorie de Hodge. II. Inst. Hautes \'Etudes Sci. Publ. Math. No. 40 (1971), 5-57. 

\bibitem[DP]{dp} A. Dimca, \c{S}. Papadima, Nonabelian cohomology jump loci from an analytic viewpoint.  Commun. Contemp. Math. 16 (2014), no. 4, 1350025, 47 pp.

\bibitem[GL]{gl} M. Green, R. Lazarsfeld, Higher obstructions to deforming cohomology groups of line bundles. J. Amer. Math. Soc. 4 (1991), no. 1, 87-103.

\bibitem[H]{h} A. Hatcher, {\it Algebraic topology}. Cambridge University Press, Cambridge, 2002

\bibitem[MP]{MP} A. Macinic, \c{S}. Papadima, Characteristic varieties of nilpotent groups and applications, {\it Proceedings of the Sixth Congress of Romanian Mathematicians (Bucharest, 2007)}, pp. 57-64, vol. 1, Romanian Academy, Bucharest, 2009.

\bibitem[Mi]{Mi} D.V. Millionshchikov,  Cohomology of solvable Lie algebras, and solvmanifolds. Mat. Zametki 77 (2005), no. 1, 67-79; Math. Notes 77 (2005), no. 1-2, 61-71.

\bibitem[Mo]{m} J. Morgan, The algebraic topology of smooth algebraic varieties. Inst. Hautes \'Etudes Sci. Publ. Math. No. 48 (1978), 137-204. 

\bibitem[S1]{si1} C. Simpson, {\it  Asymptotic behavior of monodromy. Singularly perturbed differential equations on a Riemann surface.} Lecture Notes in Mathematics, 1502. Springer-Verlag, Berlin, 1991.

\bibitem[S2]{si} C. Simpson, Subspaces of moduli spaces of rank one local systems. Ann. Sci. \'Ecole Norm. Sup. (4) 26 (1993), no. 3, 361-401. 

\bibitem[S3]{si3} C. Simpson, A weight two phenomenon for the moduli of rank one local systems on open varieties. {\it From Hodge theory to integrability and TQFT tt*-geometry}, 175-214, Proc. Sympos. Pure Math., 78, Amer. Math. Soc., Providence, RI, 2008. 

\bibitem[V]{v} C. Voisin, On the homotopy types of compact K\"ahler and complex projective manifolds. Invent. Math. 157 (2004), no. 2, 329-343. 

\bibitem[W]{w} B. Wang, Torsion points on the cohomology jump loci of compact K\"ahler manifolds. Math. Res. Lett. Volume 23 (2016), 545-563

\end{thebibliography}
\end{document}